\newtheorem{lemma}{Lemma}
\newtheorem{theorem}{Theorem}
\newtheorem{definition}{Definition}
\newtheorem{corollary}{Corollary}
\newtheorem{proposition}{Proposition}
\begin{document}

 \subjclass{ MSC 52A30}
\thispagestyle{empty}

\title[On weakly $1$-convex and weakly $1$-semiconvex sets]{On weakly $1$-convex and weakly $1$-semiconvex sets}


\author{ Tetiana M.  Osipchuk}
\address{Tetiana M. Osipchuk 
\newline\hphantom{iii} Institute of Mathematics of the National Academy of Sciences of Ukraine,
\newline\hphantom{iii} Tereshchenkivska str. 3,  
\newline\hphantom{iii} UA-01004, Kyiv, Ukraine}

\thanks{\sc
On weakly $1$-convex and weakly $1$-semiconvex sets}
\thanks{\copyright \ 2024 T.M. Osipchuk}


\maketitle
{
\small
\begin{quote}
\noindent{\bf Abstract. } The present work concerns generalized convex sets in the real multi-dimensional Euclidean space, known as weakly $1$-convex and weakly $1$-semiconvex sets. An open set is called \emph{weakly $1$-convex} (\emph{weakly $1$-semiconvex}) if, through every boundary point of the set, there passes a straight line (a closed ray) not intersecting the set. A closed set is called \emph{weakly $1$-convex} (\emph{weakly $1$-semiconvex}) if it is approximated from the outside by a family of open weakly $1$-convex (weakly $1$-semiconvex) sets. A point of the complement of a set to the whole space is a \emph{$1$-nonconvexity} (\emph{$1$-nonsemiconvexity}) \emph{point} of the set  if every straight line passing through the point (every ray emanating from the point) intersects the set. It is proved that if the collection of all $1$-nonconvexity ($1$-nonsemiconvexity) points corresponding to an open weakly $1$-convex (weakly $1$-semiconvex) set is non-empty,  then  it is open. It is also proved that the non-empty interior of a closed weakly $1$-convex (weakly $1$-semiconvex) set in the space is weakly $1$-convex (weakly $1$-semiconvex).

 \medskip

 \noindent{\bf Keywords:}  convex set,  weakly $1$-convex set, $1$-nonconvexity-point set, weakly $1$-semiconvex set, $1$-nonsemiconvexity-point set, real Euclidean space
\end{quote}
}

\section{Introduction}

The weakly $m$-convex and weakly $m$-semiconvex sets, $m=1,2,\ldots,n-1$, in the real space $\mathbb{R}^n$, $n\geqslant2$, with the Euclidean norm, can be seen as a generalization of convex sets. The notions were coined by Yurii Zelinskii \cite{Zel3}, \cite{Zel01}. First, recall the following definitions.

Any $m$-dimensional affine subspace of  $\mathbb{R}^n$, $m=0,1,2,\ldots,n-1$, $n\geqslant1$, is called an {\bf\emph{$m$-dimensional plane}}. A $1$-dimensional plane is also known  as a {\bf\emph{straight line}}.

One of two parts of an $m$-dimensional plane, $m=1,2,\ldots,n-1$, of the space $\mathbb{R}^n$, $n\geqslant2$, into which it is divided by any of its $(m-1)$-dimensional planes (herewith, the points of the $(m-1)$-dimensional plane are included) is said to be an {\bf\emph{ $m$-dimensional half-plane}}. A $1$-dimensional half-plane is also known  as a {\bf\emph{ray}}.

\begin{definition}[Zelinskii \cite{Zel3}, \cite{Zel01}]\label{def3}  An open subset  $E\subset\mathbb{R}^n$, $n\geqslant2$, is called \textbf{weakly $m$-convex} (\textbf{weakly $m$-semiconvex}),  $m=1,2,\ldots,n-1$, if for any point $x\in\partial E$, there exists
an $m$-dimensional plane $L$ ($m$-dimensional half-plane $L$) such that $x\in L$ and $L\cap E=\varnothing$.
\end{definition}

They say that a set $A$ {\bf\emph{is approximated from the outside}} by a family of open sets $A_k$,
$k=1,2,\ldots$, if $\overline{A}_{k+1}$ is contained in $A_k$, and $A=\cap_kA_k$ (\cite{Aiz3}).

It can be proved that any set approximated from the outside by a
family of open sets is closed.
\begin{definition}[Zelinskii \cite{Zel3}, \cite{Zel01}]\label{def4} A
closed subset $E\subset\mathbb{R}^n$, $n\geqslant2$, is called \textbf{weakly $m$-convex} (\textbf{weakly $m$-semiconvex}),  $m=1,2,\ldots,n-1$, if it can be approximated
from the outside by a family of open weakly $m$-convex (weakly $m$-semiconvex) sets.
\end{definition}

The class of weakly $m$-convex sets in $\mathbb{R}^n$ is denoted by $\mathbf{WC^n_m}$ and the class of weakly $m$-semiconvex sets in $\mathbb{R}^n$ is denoted by $\mathbf{WS^n_m}$.

The properties of the class of generalized convex sets on Grassmannian manifolds which are closely related to the properties of the conjugate sets (see \cite[Definition 2]{Zel01}) are investigated in \cite{Zel01}. This class includes $\mathbf{WC^n_m}$. The geometric and topological properties of weakly $m$-convex sets are also investigated in \cite{Dak}, \cite{Dak01}.

The theory of weakly $m$-semiconvex sets is newish and it is based on the research of some subclass as well as further investigation of weakly $m$-convex sets also focuses on the similar subclass. In order to determine these subclasses, we need to set the following definition.

\begin{definition}\label{def5}
A point $x\in \mathbb{R}^n\setminus E$ is called an {\bf\emph{$m$-nonconvexity}} ({\bf\emph{$m$-nonsemiconvexity}}) {\bf\emph{point}} of a subset $E\subset\mathbb{R}^n$ if every $m$-dimensional plane ($m$-dimensional half-plane) passing through $x$ intersects $E$. The set of all $m$-nonconvexity ($m$-nonsemiconvexity) points of a subset  $E\subset\mathbb{R}^n$ is called the {\bf\emph{$m$-nonconvexity-point}} ({\bf\emph{$m$-nonsemiconvexity-point}})  {\bf\emph{set}} corresponding to $E$ and is denoted by $E_m^{\triangle}$ ($E_m^{\diamondsuit}$). Moreover, $E^{\triangle}:=E_1^{\triangle}$, $E^{\diamondsuit}:=E_1^{\diamondsuit}$.
\end{definition}

The class of weakly $m$-convex sets in $\mathbb{R}^n$ with non-empty $m$-nonconvexity-point set is denoted by $\mathbf{WC^n_m}\setminus \mathbf{C^n_m}$ and the class of weakly $m$-semiconvex sets with non-empty $m$-nonsemiconvexity-point set in $\mathbb{R}^n$ is denoted by $\mathbf{WS^n_m}\setminus \mathbf{S^n_m}$.

The disconnectedness of any open weakly $1$-semiconvex set with non-empty $1$-nonsemiconvexity-point set in the plane was established by Zelinskii \cite[Theorem~7]{Zel3}. Moreover, the following result is true.

\begin{lemma}[Dakhil \cite{Dak}, Osipchuk \cite{Osi2022}]\label{theor01}
An open set or a closed set belonging to the class $\mathbf{WS^2_{1}}\setminus \mathbf{S^2_{1}}$ consists of not less than three connected components.
\end{lemma}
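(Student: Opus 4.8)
The plan is to treat the open case first and fix, for an open $E\in\mathbf{WS^2_1}\setminus\mathbf{S^2_1}$, a point $x_0\in E^{\diamondsuit}$; the closed case will follow by approximation. Two preliminary observations are essentially free. First, $x_0\notin\overline E$: if $x_0$ were a boundary point, weak $1$-semiconvexity would give a closed ray through $x_0$ disjoint from $E$, and a subray of it issuing from $x_0$ would contradict $x_0\in E^{\diamondsuit}$. Second, for each connected component $C$ of $E$ the ``angular shadow'' $\mathrm{Sh}(C):=\{\arg(y-x_0):y\in C\}$ is an \emph{open arc} of $S^1$, since $y\mapsto\arg(y-x_0)$ is a continuous open map on $\mathbb R^2\setminus\{x_0\}$ and $C$ is open and connected; moreover, as every ray from $x_0$ meets the open set $E$ in an interior point, $\bigcup_C\mathrm{Sh}(C)=S^1$.

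The crux is the claim that \emph{for every component $C$ of $E$ the arc $\mathrm{Sh}(C)$ contains no pair of antipodal directions}, equivalently there are no $z_1,z_2\in C$ with $x_0$ on the open segment $(z_1,z_2)$. Granting this, each $\mathrm{Sh}(C)$ is an open arc of length $\le\pi$, and two open arcs of length $\le\pi$ cannot cover $S^1$ (their total length is $\le 2\pi$, with equality only for disjoint arcs of length exactly $\pi$, whose union misses two points); hence the covering $\bigcup_C\mathrm{Sh}(C)=S^1$ forces at least three components, which is the assertion for open $E$.

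To establish the claim I would argue by contradiction: suppose some component $C$ contains $z_1,z_2$ with $x_0\in(z_1,z_2)$. Since $C$ is open and $z_1,z_2$ are diametrically opposite with respect to $x_0$, $C$ meets both open half-planes cut off by every line through $x_0$; thus $\mathrm{Sh}(C)$ is an open arc of length strictly greater than $\pi$ and $C$ ``wraps around'' $x_0$ while $x_0\notin\overline C$. Choose a point $p^{*}\in\overline C$ nearest to $x_0$; then $p^{*}\in\partial C\subset\partial E$ and the half-open segment $[x_0,p^{*})$ misses $\overline C$. Weak $1$-semiconvexity provides a closed ray through $p^{*}$ disjoint from $E$; passing to its subray $\ell$ issuing from $p^{*}$, we get $\ell\cap E=\varnothing$ (so $\ell\cap C=\varnothing$) and $x_0\notin\ell$ (otherwise a subray of $\ell$ from $x_0$ would contradict $x_0\in E^{\diamondsuit}$). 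Then $P:=[x_0,p^{*}]\cup\ell$ is an arc running from $x_0$ to infinity and missing $C$. The contradiction should now come from planar topology: $C$ is connected, disjoint from $P$, touches $P$ at $p^{*}$, and surrounds $x_0$; threading $P$ (possibly after replacing $p^{*}$ by a more suitable boundary point of $C$, e.g.\ one met tangentially by a ray from $x_0$, to rule out $\ell$ pointing radially outward) into a properly embedded line or a Jordan curve produces a barrier that a connected set wrapping around $x_0$ cannot cross.

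I expect this topological step to be the principal difficulty. The subtle points are the choice of the boundary point of $C$ at which weak $1$-semiconvexity is invoked, the case in which the avoiding ray $\ell$ points radially away from $x_0$ (so that $P$ is an honest ray and a different boundary point must be used), and the fact that the first-hit map $\theta\mapsto p(\theta)$ along rays from $x_0$ is in general only lower semicontinuous, so that the shadows of different components may interlock near directions where $\mathrm{dist}(x_0,\overline E)$ jumps. Once the open case is settled, the closed case is routine: writing a closed $E=\bigcap_kE_k$ with $E_k$ open weakly $1$-semiconvex and $\overline{E_{k+1}}\subset E_k$, for all large $k$ we have $x_0\notin E_k$ and every ray from $x_0$ meets $E\subset E_k$, so $E_k\in\mathbf{WS^2_1}\setminus\mathbf{S^2_1}$; each component of $E$ lies in a single component of $E_k$, whose shadow has length $\le\pi$ by the claim, and the same arc-covering count yields at least three components of $E$.
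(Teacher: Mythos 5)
Your reduction of the lemma to a single geometric claim is clean, and the surrounding steps are correct: $x_0\in E^{\diamondsuit}$ forces $x_0\notin\overline{E}$; each shadow $\mathrm{Sh}(C)$ is an open connected subset of $S^1$; the shadows cover $S^1$; and two open arcs each free of antipodal pairs (hence of length $\le\pi$) cannot cover $S^1$, so the count would follow. The outside-approximation argument for the closed case is also fine once the open case is available. But the entire difficulty of the lemma has been pushed into the claim that no component $C$ meets both open rays of a line through $x_0$, and that claim is not proved in your proposal -- you yourself flag the ``topological step'' as the principal difficulty, and the sketch you give does not close it. Concretely: the barrier $P=[x_0,p^{*}]\cup\ell$ is a simple arc from a finite point to infinity, and such an arc does \emph{not} separate the plane (its complement is connected, since one can pass around the endpoint $x_0$), so ``a barrier that a connected set wrapping around $x_0$ cannot cross'' is not justified as stated. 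The natural repair -- take on each side of $x_0$ along $L$ the point of $\overline{C}$ nearest to $x_0$, say $b_1,b_2\in\partial C\subset\partial E$, and join the two escape rays $\ell_1\ni b_1$, $\ell_2\ni b_2$ (which cannot lie along $L$, since a ray missing $E$ can contain neither $x_0$ nor $z_i$) through the segment $[b_1,b_2]$ -- does give a properly embedded line separating $z_1$ from $z_2$ \emph{when $\ell_1$ and $\ell_2$ leave $L$ on opposite sides}, and then $C$ being connected and disjoint from the barrier is a contradiction. When both escape rays leave on the same side of $L$, however, the barrier is a ``U'' whose complement has $z_1$ and $z_2$ in the same component, and no contradiction results; rays from enclosed boundary points can legitimately escape through the gap of $[z_1,z_2]$ outside $E$, and other components of $E$ may sit inside the ``U''. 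So the argument genuinely stalls exactly where you predicted, and nothing in the proposal resolves it.

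Note also that the claim you rely on is asserted for an \emph{arbitrary} $x_0\in E^{\diamondsuit}$ and an arbitrary component, and its truth is not evident -- it is of essentially the same depth as the lemma itself, since granting it the lemma follows by a two-line covering count. The present paper does not prove this lemma either; it quotes it from Dakhil's thesis and from Osipchuk's paper on closed weakly $m$-semiconvex sets, so there is no in-paper argument to lean on, and those cited proofs are not the soft planar-topology step your sketch hopes for. As it stands, the proposal is an honest reduction plus an unproven key claim, i.e.\ it does not constitute a proof.
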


Interestingly, the number of components of a set belonging to the class $\mathbf{WS^2_{1}}\setminus \mathbf{S^2_{1}}$ is also affected by the smoothness of its boundary.

\begin{lemma}[Osipchuk \cite{Osipchuk2019}]\label{theor3}
Suppose that  an open bounded subset $E\subset \mathbb{R}^2$ with smooth boundary belongs to the class $\mathbf{WS^2_1}\setminus \mathbf{S^2_1}$. Then $E$
consists of not less than four connected components.
\end{lemma}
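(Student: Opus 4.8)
The plan is to argue by contradiction, eliminating the possibility of exactly three components. By Lemma~\ref{theor01}, $E$ has at least three connected components; suppose it has exactly three, $E=E_1\cup E_2\cup E_3$ with the $E_i$ its (open, connected) components. Since $E$ is bounded and $\partial E$ is a smooth $1$-manifold, $\partial E$ is a finite disjoint union of smooth Jordan curves; hence each $E_i$ is bounded and the compact sets $\overline{E_1},\overline{E_2},\overline{E_3}$ are pairwise disjoint (a point on the boundary of two distinct components would be a non-manifold point of $\partial E$). Fix a $1$-nonsemiconvexity point $x_0\in E^{\diamondsuit}$; one checks that $x_0$ lies in the open set $\mathbb{R}^2\setminus\overline{E}$ (this follows from the openness of $E^{\diamondsuit}$, or can be verified directly). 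Identify directions with the unit circle $S^1$, and for $\theta\in S^1$ let $r_\theta$ be the closed ray issuing from $x_0$ in direction $\theta$. Put
\[
\Theta_i=\{\theta\in S^1:\ r_\theta\cap E_i\neq\varnothing\},
\]
the angular shadow of $E_i$ from $x_0$. As the image of the connected open set $E_i$ under the radial projection $\mathbb{R}^2\setminus\{x_0\}\to S^1$, each $\Theta_i$ is a non-empty open arc, or all of $S^1$; and since $x_0\in E^{\diamondsuit}$, every $r_\theta$ meets $E$, so $\Theta_1\cup\Theta_2\cup\Theta_3=S^1$. If some $\Theta_i=S^1$, then $x_0\in E_i^{\diamondsuit}$, while $E_i$ is open, connected and weakly $1$-semiconvex (each point of $\partial E_i\subseteq\partial E$ carries a ray missing $E\supseteq E_i$), contradicting Lemma~\ref{theor01}. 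If $\Theta_i\cup\Theta_j=S^1$ for two indices, then the open weakly $1$-semiconvex set $E_i\cup E_j$ has $x_0\in(E_i\cup E_j)^{\diamondsuit}$ yet only two components, again contradicting Lemma~\ref{theor01}. In particular no $\Theta_i$ contains another, so each $\Theta_i=(\alpha_i,\beta_i)$ is a proper open arc; and the combinatorics of three proper arcs covering a circle with no containments puts their six endpoints --- after relabelling, and up to degenerate coincidences --- in the cyclic order $\alpha_1,\beta_3,\alpha_2,\beta_1,\alpha_3,\beta_2$, so that cyclically $\beta_i\in\Theta_{i+1}$ and $\alpha_{i+1}\in\Theta_i$.

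The step where smoothness is decisive is a supporting-ray lemma. Taking $\theta_k\to\beta_i$ from inside $\Theta_i$ and points $p_k\in E_i\cap r_{\theta_k}$, boundedness of $E_i$ gives a subsequential limit $p_i^{+}\in\overline{E_i}\cap r_{\beta_i}$, and $p_i^{+}\in\partial E_i$, because an interior point of $E_i$ on $r_{\beta_i}$ would drag directions on both sides of $\beta_i$ into $\Theta_i$. By definition of the shadow, $\overline{E_i}$ lies in the closed angular sector spanned by $r_{\alpha_i}$ and $r_{\beta_i}$; since $\partial E_i$ is smooth, the only way $E_i$ can fill a half-disc near $p_i^{+}$ while remaining in that sector is that its tangent line $T_i^{+}$ at $p_i^{+}$ coincide with the line through $x_0$ and $p_i^{+}$, with $E_i$ lying locally on the side of $T_i^{+}$ that faces into the sector. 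Symmetrically, $r_{\alpha_i}$ supports $E_i$ at a point $p_i^{-}\in\partial E_i$, with tangent line $T_i^{-}$ through $x_0$.

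Finally I would extract a contradiction from the arrangement of these six supporting rays about $x_0$. Since $\beta_i$ is interior to $\Theta_{i+1}$, the component $E_{i+1}$ contains points at directions on either side of $\beta_i$, hence meets both open half-planes bounded by $T_i^{+}$. Now apply weak $1$-semiconvexity at $p_i^{+}$: there is a closed ray $\rho_i$ with $p_i^{+}\in\rho_i$ and $\rho_i\cap E=\varnothing$. Since $E_i$ lies locally on one side of $T_i^{+}$, the ray $\rho_i$ must run inside the closed half-plane bounded by $T_i^{+}$ that does not contain $E_i$; and, missing $E_{i+1}$ entirely, it must in particular avoid the non-empty part of $E_{i+1}$ lying in that very half-plane. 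Carrying this out for $i=1,2,3$, and recording for each tangent line $T_i^{\pm}$ the side onto which the remaining components are thereby forced, one is led to a cyclically inconsistent assignment of sides around $x_0$ --- in effect a winding-number obstruction that three components cannot satisfy --- which is the desired contradiction. Hence $E$ has at least four connected components. The crux, and the point I expect to be the main obstacle, is precisely this last step: turning the local tangency data at the $p_i^{\pm}$ together with the escaping rays $\rho_i$ into an airtight global contradiction, and correctly disposing of the degenerate sub-cases (non-distinct arc endpoints, coincidences among the $p_i^{\pm}$, an escaping ray running exactly along one of the $T_i^{\pm}$, and shadow arcs of angular width $\geqslant\pi$); by comparison the reductions of the first paragraph and the supporting-ray lemma are routine.
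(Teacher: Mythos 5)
Your outline stops exactly where the lemma actually has to be proved. Everything you set up --- at least three components via Lemma~\ref{theor01}, $x_0\notin\overline{E}$, the shadows $\Theta_i$ being open arcs that cover $S^1$ with no containments, the limit points $p_i^{\pm}\in\partial E_i$ on the extreme rays, and the tangency of $\partial E$ to the lines through $x_0$ at those points --- is the routine part, and you say so yourself. The conclusion ``three components are impossible when $\partial E$ is smooth'' has to come entirely from the last step, and there you only assert that recording sides of the tangent lines $T_i^{\pm}$ ``is led to a cyclically inconsistent assignment \dots in effect a winding-number obstruction,'' without defining the assignment, stating the invariant, or deriving the contradiction; you also explicitly defer all the delicate sub-cases (arc endpoints coinciding, $p_i^{+}=p_i^{-}$ or coincidences between different components' tangency points, an escaping ray $\rho_i$ lying along $T_i^{\pm}$, shadow arcs of width $\geqslant\pi$). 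As submitted this is a plausible programme, not a proof, and since the present paper gives no proof of Lemma~\ref{theor3} (it is quoted from \cite{Osipchuk2019}), the missing step cannot be borrowed from the text either.

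There is also a concrete false claim in your first paragraph: a common boundary point of two components need not be a non-manifold point of $\partial E$. Take $E=U(O,2)\setminus\{x\in\mathbb{R}^2:\|x\|=1\}$: its boundary is the disjoint union of two smooth circles, yet the two components (the open unit disc and the open annulus) share the whole unit circle as boundary. So $\overline{E_1},\overline{E_2},\overline{E_3}$ need not be pairwise disjoint, and at a point such as $p_i^{+}$ the set $E$ (though not $E_i$) may occupy both local half-discs determined by $T_i^{+}$. This matters precisely in your endgame: the position of the escaping ray $\rho_i$ and your ``side'' bookkeeping are constrained by where all of $E$ sits near $p_i^{+}$, not just $E_i$, so this configuration must be handled, not excluded by the incorrect disjointness claim. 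Until the final combinatorial/winding argument is written out in a way that survives these cases, the proof of the four-component bound is missing.
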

\begin{lemma}[Osipchuk \cite{Osi2022}]\label{theor3_1}
Suppose that  a closed bounded subset $E\subset \mathbb{R}^2$ with smooth boundary and such that $\mathrm{Int}\, E$ is not $1$-semiconvex belongs to the class $\mathbf{WS^2_1}\setminus \mathbf{S^2_1}$. Then $E$
consists of not less than four connected components.
\end{lemma}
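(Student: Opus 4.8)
The plan is to deduce the closed case from the open case, i.e.\ from Lemma~\ref{theor3}, applied to the open set $U:=\mathrm{Int}\,E$. Since $E$ is bounded, so is $U$; and since $E$ has smooth boundary, $E=\overline{U}$, the sets $\partial E$ and $\partial U$ coincide (up to components of $\partial E$ not adjacent to $\mathrm{Int}\,E$, which are in any case smooth circles), and $E$ is a compact two-dimensional manifold with boundary. Thus $U$ is a bounded open planar set with smooth boundary, and Lemma~\ref{theor3} will apply to $U$ once we verify $U\in\mathbf{WS^2_1}$ and $U^{\diamondsuit}\neq\varnothing$. The second condition follows from the first together with the hypothesis: ``$\mathrm{Int}\,E$ is not $1$-semiconvex'' means $U\notin\mathbf{S^2_1}$, and $U\in\mathbf{WS^2_1}$ then forces $U^{\diamondsuit}\neq\varnothing$, i.e.\ $U\in\mathbf{WS^2_1}\setminus\mathbf{S^2_1}$. (In particular $U\neq\varnothing$, since the empty set is $1$-semiconvex.)

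The substantive step is showing $U=\mathrm{Int}\,E\in\mathbf{WS^2_1}$ — this is exactly the planar semiconvex instance of the assertion that the interior of a closed weakly $1$-semiconvex set is weakly $1$-semiconvex. Let $\{E_k\}$ be open weakly $1$-semiconvex sets approximating $E$ from the outside: $\overline{E}_{k+1}\subset E_k$ and $E=\bigcap_k E_k$, so $U\subset E\subset E_k$ for all $k$. Fix $x\in\partial U=\partial E$. As $E_k\supset E\ni x$ and $E_k$ is open, $x$ is an \emph{interior} point of $E_k$, so $x\notin\partial E_k$; hence one must first produce genuine boundary points of $E_k$ near $x$. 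For $\varepsilon>0$ the ball $B(x,\varepsilon)$ contains some $z\notin E$; since the $E_k$ decrease, $z\notin E_k$ for all large $k$, so $B(x,\varepsilon)$ meets both $E_k$ (it contains $x$) and its complement, hence meets $\partial E_k$. A standard diagonal choice then yields $x_k\in\partial E_k$ with $x_k\to x$. By weak $1$-semiconvexity of $E_k$ there is a closed ray $R_k$ with $x_k\in R_k$ and $R_k\cap E_k=\varnothing$, whence $R_k\cap U=\varnothing$. Writing $R_k$ in terms of its apex $x_k$ and unit direction $v_k\in S^1$ and passing to a subsequence with $v_k\to v$, the rays $R_k$ converge to the closed ray $R$ with apex $x$ and direction $v$. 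If $R$ met $U$ at a point $p$, then, $U$ being open, a whole neighbourhood of $p$ would lie in $U$ and would be crossed by $R_k$ for $k$ large, contradicting $R_k\cap U=\varnothing$; so $R\cap U=\varnothing$. Since $x\in\partial U$ was arbitrary, $U\in\mathbf{WS^2_1}$.

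Now Lemma~\ref{theor3} applies to $U$ and gives that $U=\mathrm{Int}\,E$ has at least four connected components $U_1,\dots,U_N$ with $N\geqslant4$ (finitely many, as $\overline U=E$ is a compact manifold with boundary). To transfer the count to $E$: because $\partial E$ is smooth, each point $q\in\partial E$ adjacent to $\mathrm{Int}\,E$ has a neighbourhood $W$ in which $E$ is a closed half-disc, so $W\cap\mathrm{Int}\,E$ is connected and nonempty, hence contained in a single $U_\ell$. This makes the closures $\overline{U_i}$ pairwise disjoint (if $q\in\overline{U_i}\cap\overline{U_j}$ then $q\in\partial E$, and the above forces $U_i=U_\ell=U_j$). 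Therefore $E=\overline U=\overline{U_1}\sqcup\cdots\sqcup\overline{U_N}$ is a disjoint union of $N$ nonempty connected sets, each open in $E$ (its complement being the finite union of the closed sets $\overline{U_m}$, $m\neq i$); these are the connected components of $E$, so $E$ has at least four of them.

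The main obstacle is the middle step — upgrading weak $1$-semiconvexity from $E$ to $\mathrm{Int}\,E$ — since boundary points of $E$ lie in the interior of every approximating set $E_k$, one cannot invoke the weak $1$-semiconvexity of $E_k$ at $x$ directly and must instead run the compactness/limit argument on avoiding rays above and check that the limiting ray really misses the open set. The rest is the hypothesis, a direct appeal to Lemma~\ref{theor3}, and point-set bookkeeping that uses smoothness of $\partial E$ precisely to keep the components of $\mathrm{Int}\,E$ from coalescing in $E$.
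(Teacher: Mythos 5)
Your overall strategy is sound, and it cannot be matched against a proof in this paper because the paper only quotes Lemma~\ref{theor3_1} from \cite{Osi2022}; the natural comparison is with the machinery the paper does develop. Your pivotal middle step -- that $\mathrm{Int}\,E$ of a closed weakly $1$-semiconvex set is weakly $1$-semiconvex -- is exactly the planar case of Lemma~\ref{lemm6} (generalized as Theorem~\ref{theor7}), and you prove it by a different and perfectly valid route: you extract boundary points $x_k\in\partial E_k$ of the approximating sets converging to $x\in\partial\,\mathrm{Int}\,E$, take avoiding rays there, and pass to a limiting direction, using openness of $\mathrm{Int}\,E$ to see the limit ray misses it. The paper instead argues by contradiction, using Lemma~\ref{lemm1} to place balls $U(x_\alpha,d)\subset\mathrm{Int}\,E$ on every ray from a putative $1$-nonsemiconvexity point and then contradicting weak $1$-semiconvexity of the approximants at nearby boundary points. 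Your limit argument is more self-contained; the paper's gives the uniform radius $d$ that it reuses in Theorems~\ref{theor6} and~\ref{theor7}. One small wording issue: the avoiding ray from Definition~\ref{def3} only \emph{contains} $x_k$, so you should explicitly pass to the subray with apex $x_k$ (it still avoids $E_k$), as you implicitly do.

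The one assertion that is not justified is ``since $E$ has smooth boundary, $E=\overline{U}$ and $E$ is a compact two-dimensional manifold with boundary.'' A closed bounded set with smooth boundary may have pieces with empty interior (e.g.\ whole components of $\partial E$ that are smooth circles not adjacent to $\mathrm{Int}\,E$), in which case $E\neq\overline{\mathrm{Int}\,E}$, $E$ is not a manifold with boundary, and your final display $E=\overline{U_1}\sqcup\cdots\sqcup\overline{U_N}$ is false as stated; the finiteness of $N$ is also argued from this. Fortunately the conclusion is robust and your own local half-disc observation repairs it: at every point of $\overline{U_i}\cap\partial E$ the set $E$ locally coincides with a closed half-disc contained in $\overline{U_i}$, so each $\overline{U_i}$ is relatively open (and closed) in $E$ and is therefore itself a connected component of $E$, with no need for $E=\overline U$ or for $N<\infty$; any ``thin'' parts of $E$ sit on whole boundary circles disjoint from $\overline U$ and can only add components. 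The same local analysis shows $\partial U$ is a union of entire components of $\partial E$, which is what you need (and should say) to legitimize applying Lemma~\ref{theor3} to $U$ under its smooth-boundary hypothesis. With these repairs the proof is complete.
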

  \begin{figure}[h]
	\centering
   \includegraphics[width=11 cm]{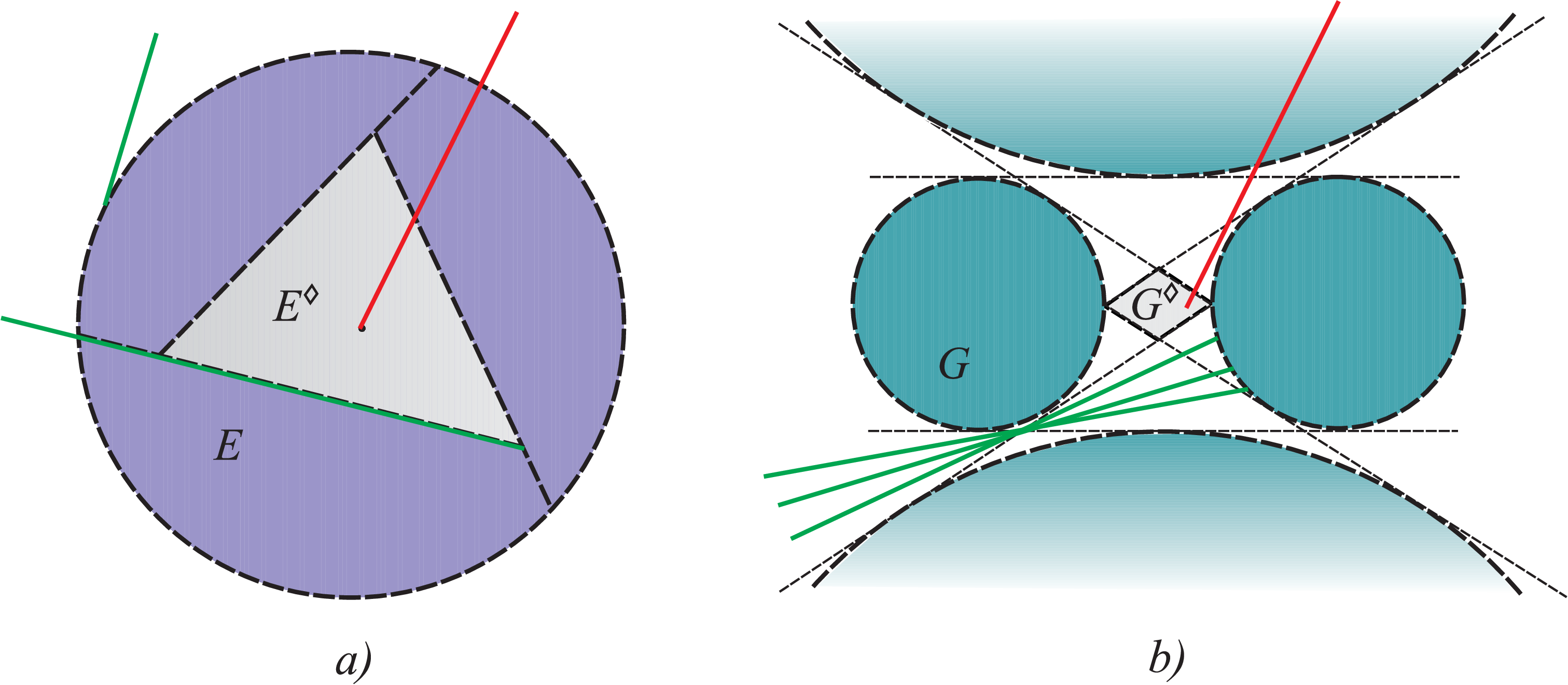}
	\caption{}\label{Fig17}
\end{figure}

The example of an open set $E\in\mathbf{WS^2_{1}}\setminus \mathbf{S^2_{1}}$ consisting of three components is in Figure~\ref{Fig17} a), and an open set $G\in\mathbf{WS^2_{1}}\setminus \mathbf{S^2_{1}}$ with smooth boundary and four components is in Figure~\ref{Fig17} b).  Moreover, if we want to construct  an open set belonging to the class $\mathbf{WS^2_{1}}\setminus \mathbf{S^2_{1}}$ with countably infinite number of components, then, instead of a triangle inside a convex set, we should  throw away a closed convex generalized polygon (the convex hull of a bounded countably infinite set of points in the plane with boundary containing countably infinite number of vertices). The example of a closed convex generalized polygon is the convex hull of the points
$$
y_0,\, y_\pi,\, y_{\frac{\pi}{2}},\, y_{2\pi-\frac{\pi}{2}},\, y_{\frac{\pi}{4}},\, y_{2\pi-\frac{\pi}{4}},\,\ldots,\, y_{\frac{\pi}{2^k}},\, y_{2\pi-\frac{\pi}{2^k}},\,\ldots
$$
in Figure~\ref{Fig18} a).  And also cut the obtained set along rays containing the polygon sides and the accumulation points of the polygon vertices as it is shown in Figure~\ref{Fig18} b).

Examples of closed sets belonging to $\mathbf{WS^2_{1}}\setminus \mathbf{S^2_{1}}$ with non-smooth or smooth boundary see in \cite{Osi2022}.

\begin{figure}[h]
	\centering
   \includegraphics[width=11 cm]{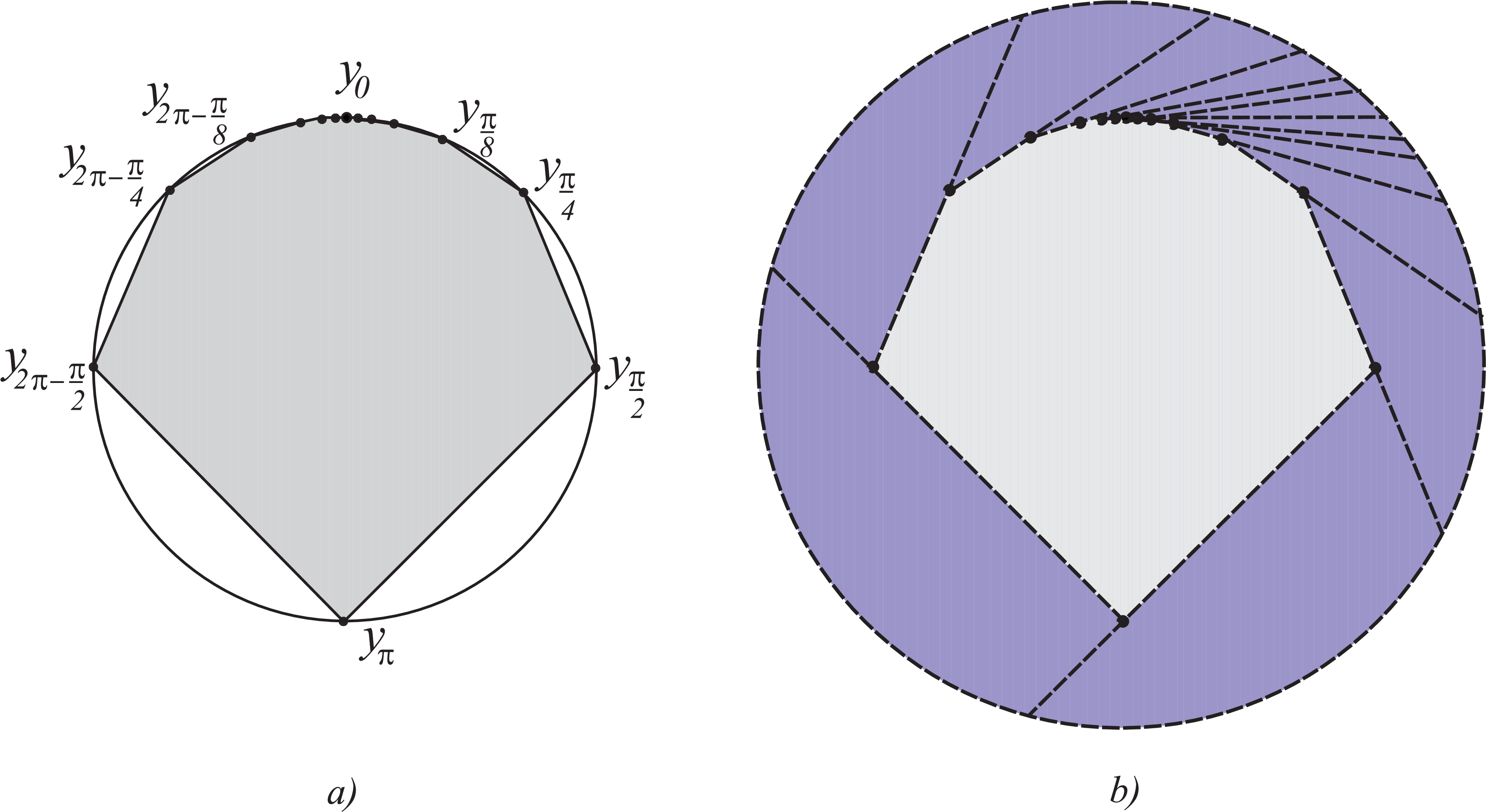}
	\caption{}\label{Fig18}
\end{figure}

Notice that the above properties of weakly $m$-semiconvex sets could so far be established only in the plane, in contrast to weakly $m$-convex sets.

\begin{lemma}[Dakhil \cite{Dak}, Osipchuk \cite{Osi2022_3}]\label{theor01}
An open set or a closed set that belongs to the class $\mathbf{WC^n_{n-1}}\setminus \mathbf{C^n_{n-1}}$ consists of not less than three connected components.
\end{lemma}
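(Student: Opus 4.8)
\emph{Plan.} I would prove the open case directly and reduce the closed case to it via the approximation of Definition~\ref{def4}. So let $E\subset\mathbb R^n$ be open, weakly $(n-1)$-convex, with $x\in E_{n-1}^{\triangle}$. First note $x\notin\overline E$: indeed $x\notin E$ by the definition of $E_{n-1}^{\triangle}$, and if $x\in\partial E$ then weak $(n-1)$-convexity would supply a hyperplane through $x$ missing $E$, contradicting $x\in E_{n-1}^{\triangle}$. Assume, for contradiction, that $E$ has at most two connected components $E_1$ (and $E_2$); each is a non-empty open connected set with $\partial E_i\subset\partial E$. Let $\rho(p)=(p-x)/|p-x|$ be the radial projection from $x$ and put $C_i=\rho(E_i)\subset\mathbb S^{n-1}$, a non-empty open connected subset of the sphere. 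A hyperplane through $x$ with unit normal $\nu$ meets $E$ iff the great subsphere $\nu^{\perp}\cap\mathbb S^{n-1}$ meets $\bigcup_iC_i$; hence $x\in E_{n-1}^{\triangle}$ says precisely that \emph{every} great $(n-2)$-subsphere of $\mathbb S^{n-1}$ meets $\bigcup_iC_i$.

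The key step is: for each component $E_i$ there is a hyperplane $H_i$ with $E_i\subset H_i^{+}$, $x\in H_i^{-}$ and $H_i\cap E=\varnothing$, where $H_i^{\pm}$ are the open half-spaces bounded by $H_i$. To get it, pick $p\in E_i$, let $u=\rho(p)$ and let $t^{*}>0$ be the first parameter with $x+t^{*}u\in\overline{E_i}$; then $q:=x+t^{*}u\in\partial E_i\subset\partial E$. Weak $(n-1)$-convexity gives a hyperplane $H_i\ni q$ with $H_i\cap E=\varnothing$, so $E_i$, being connected and disjoint from $H_i$, lies in one open half-space $H_i^{+}$. The line through $x$ and $p$ is not contained in $H_i$ (else $x\in H_i$, contradicting $x\in E_{n-1}^{\triangle}$), so it crosses $H_i$ only at $q$; since $p\in E_i\subset H_i^{+}$ lies beyond $q$, the point $x$ lies on the opposite side, i.e.\ $x\in H_i^{-}$. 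Writing $\nu_i$ for the unit normal of $H_i$ pointing into $H_i^{+}$, any ray from $x\in H_i^{-}$ reaching $E_i\subset H_i^{+}$ must have positive inner product with $\nu_i$; hence $C_i\subset\{v\in\mathbb S^{n-1}:\langle v,\nu_i\rangle>0\}$, an open hemisphere.

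Now conclude. If $E$ is connected, then $\bigcup_iC_i=C_1$ lies in an open hemisphere, so the great subsphere $\{v:\langle v,\nu_1\rangle=0\}$ (non-empty since $n\geqslant2$) misses it, contradicting the rephrased hypothesis. If $E=E_1\cup E_2$: should $E_2\subset H_1^{+}$, then all of $E$ lies in $H_1^{+}$ and the hyperplane through $x$ parallel to $H_1$ lies in $H_1^{-}$, hence misses $E$ --- impossible; therefore $E_2\subset H_1^{-}$, and symmetrically $E_1\subset H_2^{-}$. Thus $x\in H_1^{-}\cap H_2^{-}$, $E_1\subset H_1^{+}\cap H_2^{-}$, $E_2\subset H_1^{-}\cap H_2^{+}$: the three sets occupy three distinct ``wedges'' cut out by $H_1,H_2$. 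If $H_1\not\parallel H_2$, quotienting $\mathbb R^n$ by the direction of the $(n-2)$-plane $M=H_1\cap H_2$ turns these wedges into three of the four planar sectors around the image of $M$; the hyperplane $P\supset M$ with $x\in P$ maps to the line joining the image of $M$ to the image of $x$, which runs through the remaining pair of opposite sectors and so avoids $E_1$ and $E_2$. Hence $P$ is a hyperplane through $x$ missing $E$, contradicting $x\in E_{n-1}^{\triangle}$. If $H_1\parallel H_2$, a short case check shows the displayed inclusions are either inconsistent (one $E_i$ empty) or again force such a separating hyperplane through $x$. Either way we reach a contradiction, so $E$ has at least three components.

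For a closed $E\in\mathbf{WC^n_{n-1}}\setminus\mathbf{C^n_{n-1}}$ with $E=\bigcap_kE_k$ as in Definition~\ref{def4}, the nonconvexity point $x$ satisfies $x\notin E_k$ for all large $k$ (as the $E_k$ decrease to $E$), while every hyperplane through $x$ meets $E\subset E_k$; hence $E_k\in\mathbf{WC^n_{n-1}}\setminus\mathbf{C^n_{n-1}}$ and, by the open case, has at least three components. It then remains to show that three components survive in the limit, and this is the step I expect to be the main obstacle. Distinct components cannot merge, since $\overline{E_{k+1}}\subset E_k$ forces each component of $E$ to sit in a nested thread of components of the $E_k$; the only danger is that a component-thread contracts to the empty set. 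I would rule this out by separating the (closed) components of $E$ by disjoint open sets and a compactness argument inside a large ball, with a routine truncation handling the unbounded case; I would also re-verify here the elementary fact, used above, that $\partial E_i\subset\partial E$ for a component $E_i$, which is what legitimizes invoking weak $(n-1)$-convexity of $E$ at the boundary points produced in the construction.
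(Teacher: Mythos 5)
This lemma is stated in the paper without proof --- it is quoted from Dakhil \cite{Dak} and Osipchuk \cite{Osi2022_3} --- so there is no in-paper argument to compare with; I can only judge your proposal on its own. Your open case is correct and essentially complete: the radial reformulation of ``every hyperplane through $x$ meets $E$'' as ``every great $(n-2)$-subsphere meets $\rho(E)$'', the construction of $H_i$ through the first point where the ray from $x$ toward $p\in E_i$ hits $\partial E_i\subset\partial E$, the sign bookkeeping giving $E_i\subset H_i^{+}$, $x\in H_i^{-}$, and the wedge/quotient construction of a hyperplane through $x$ avoiding the two wedges containing $E_1$ and $E_2$ (including the parallel subcase) all check out.

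The genuine gap is the closed case, which you acknowledge but do not close, and the route you sketch does not suffice as stated. From ``every $E_k$ has at least three components'' one cannot conclude anything about the number of components of $E=\bigcap_k E_k$: a thread of components of the $E_k$ may have empty intersection (shrink away or drift off to infinity), so $E$ can meet only two of the threads even though each $E_k$ has three or more components; and if $E$ really had only two components, your plan of separating them by disjoint open sets plus compactness produces nothing to contradict --- the hyperplane geometry never re-enters. The natural completion stays inside your own machinery but applies it to $E$ directly: assume $E$ has at most two components $F_1,F_2$; for large $k$ one has $x\notin\overline{E_k}$ (if $x\in\partial E_k$, weak $(n-1)$-convexity of $E_k$ gives a hyperplane through $x$ missing $E_k\supset E$, contradicting $x\in E_{n-1}^{\triangle}$); each $F_i$, being connected, lies in a single component $C_i$ of $E_k$, and your key step run for $C_i$ with a point $p\in F_i\subset C_i$ yields $H_i\ni q_i\in\partial E_k$ with $H_i\cap E_k=\varnothing$, $C_i\subset H_i^{+}$, $x\in H_i^{-}$. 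Since $H_i\cap E=\varnothing$ and the $F_j$ are connected, the dichotomy ``$F_2\subset H_1^{+}$ forces a hyperplane through $x$ missing $E$'' and then the wedge (or parallel) construction go through verbatim with $F_1,F_2$ in place of $E_1,E_2$, and the final hyperplane through $x$ misses $E$ itself, the desired contradiction; distinct components of $E$ lying in distinct wedges is exactly what replaces your ``threads survive in the limit'' step. As written, however, your closed case is incomplete, and it is the part for which the paper cites \cite{Osi2022_3}.
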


But unlike weakly $1$-semiconvex sets with smooth boundary, {\it any open weakly $(n-1)$-convex set in $\mathbb{R}^n$ with smooth boundary does not have $(n-1)$-nonconvexity points}  \cite[Proposition~2.3.7]{Dak}.

An example of sets belonging to the class $\mathbf{WC^2_{1}}\setminus \mathbf{C^2_{1}}$ can be constructed by cutting an open convex set without closed convex polygon or generalized polygon in Figures~\ref{Fig17} a) and ~\ref{Fig18} b) along the straight lines containing the sides and the accumulation points of vertices of the polygons instead of rays. Examples of open and closed sets belonging to $\mathbf{WC^n_{n-1}}\setminus \mathbf{C^n_{n-1}}$ see in \cite{Osi}.

For $n\geqslant 3$ and $m=1,2,\ldots,n-2$, the disconnectedness property is violated both for weakly $m$-convex and for weakly $m$-semiconvex sets.

 \begin{lemma}[Osipchuk \cite{Osi2022_3, Osi2022}]\label{theor02}
There exist domains and closed connected sets in the space $\mathbb{R}^n$, $n\geqslant 3$, belonging to the class $\mathbf{WC^n_m}\setminus \mathbf{C^n_m}$ ($\mathbf{WS^n_m}\setminus \mathbf{S^n_m}$), $1\leqslant m<n-1$.
\end{lemma}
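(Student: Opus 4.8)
The plan is to exhibit one explicit family of examples covering all admissible pairs $(n,m)$: fix $n\geqslant 3$ and $m$ with $1\leqslant m\leqslant n-2$, build an open connected example (a domain), and then obtain a closed one by an outer approximation. First I would take an $n$-dimensional simplex $P$ with centroid $c$, placed well inside a large open ball $B$, and let $H_1,\dots,H_{n+1}$ be the hyperplanes carrying the facets of $P$; then $c$ lies on none of them. Put $G=B\setminus(P\cup H_1\cup\dots\cup H_{n+1})$. This set is open, $c\notin G$, and it has finitely many connected components $Z_1,\dots,Z_N$ with $N>1$, namely the regions that the hyperplane arrangement $\{H_j\}$ cuts out of $B$ other than $\mathrm{Int}\,P$. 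It is weakly $m$-convex and weakly $m$-semiconvex: through a point of $\partial B$ one uses the $m$-planes lying in the tangent hyperplane to $\partial B$ at that point, which meet $\overline B$ only there and so are disjoint from $G\subset B$; through a point lying on some $H_j$ or on a facet of $P$ one uses the $m$-planes contained in $H_j$, which are disjoint from $G$ because $G\cap H_j=\varnothing$; the corresponding $m$-half-planes work just as well. Moreover $c\in G_m^{\triangle}\cap G_m^{\diamondsuit}$: every $m$-plane $L$ through $c$ meets $\mathrm{Int}\,B\setminus P$ (as $P$ is bounded and lies well inside $B$) and, since $c$ lies on no $H_j$, the plane $L$ is contained in no $H_j$, so $L\cap(\mathrm{Int}\,B\setminus P)$ is not absorbed by the finitely many cuts and $L$ meets $G$; the same computation along a ray issuing from $c$ inside a prescribed $m$-half-plane gives $c\in G_m^{\diamondsuit}$.

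Next I would reconnect $G$. Since $B\setminus P$ is connected, the adjacency graph of $Z_1,\dots,Z_N$ across the hyperplanes $H_j$ is connected, so I can choose $N-1$ small open balls $\beta_1,\dots,\beta_{N-1}$ realizing a spanning tree of that graph, each $\beta_i$ centred at a point lying on exactly one hyperplane $H_{j(i)}$ --- off $P$ and off $\partial B$ --- and so small that $\beta_i$ meets only the two regions adjacent across $H_{j(i)}$ there. Set $E=G\cup\beta_1\cup\dots\cup\beta_{N-1}$; then $E$ is an open connected set with $c\notin E$, and since $E\supseteq G$ we still have $c\in E_m^{\triangle}\cap E_m^{\diamondsuit}$. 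The crux is that $E$ stays weakly $m$-convex and weakly $m$-semiconvex. Because $\beta_i\setminus G=\beta_i\cap H_{j(i)}$, passing from $G$ to $E$ only removes the small discs $\beta_i\cap H_{j(i)}$ from the complement; hence $\overline E=\overline G$ and $\partial E\subseteq\partial G$, so no new boundary point is created. For a boundary point on $\partial B$ the former tangent $m$-planes (and $m$-half-planes) are still disjoint from $E$, the $\beta_i$ being interior to $B$. For a boundary point $q$ on some $H_k$ (in particular on a facet of $P$) one needs an $m$-plane through $q$ that lies in $H_k$ and is disjoint from every $\beta_i$: inside the $(n-1)$-dimensional $H_k$, and because $m\leqslant n-2$, an $m$-dimensional affine subspace through $q$ can be chosen to avoid any prescribed finite family of small balls --- take it generic, or tangent to $\partial\beta_i$ at $q$ when $q\in\partial\beta_i$ --- and such a subspace is also disjoint from $G$; its halves serve the semiconvex case. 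This is precisely where $m<n-1$ enters: for $m=n-1$ an $(n-1)$-plane through $q\in H_k$ avoiding the $\beta_i$ would have to coincide with $H_k$, which meets them, in accordance with the known fact that each member of $\mathbf{WC^n_{n-1}}\setminus\mathbf{C^n_{n-1}}$ has at least three connected components.

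Finally, for the closed connected examples I would pass to an outer approximation: thicken each cut $H_j$ to a thin open slab about it, shrink $P$ slightly, enlarge $B$ slightly, keep the bridges, and let $A_k$ be the set so obtained as the slab widths decrease to zero. Each $A_k$ is an open connected set, $\overline{A_{k+1}}\subset A_k$, and $A_k$ is still weakly $m$-convex and weakly $m$-semiconvex --- now the escape $m$-planes run inside the slabs, which only gives more room, and the bridges are threaded past exactly as before using $m<n-1$ --- so $\bigcap_k A_k$ is a closed connected set lying in $\mathbf{WC^n_m}\setminus\mathbf{C^n_m}$ (respectively $\mathbf{WS^n_m}\setminus\mathbf{S^n_m}$), with $c$ in its $m$-nonconvexity (respectively $m$-nonsemiconvexity) set. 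The hard part is the verification just sketched that inserting the bridges destroys no escape $m$-plane: one must check simultaneously that no bad new boundary point appears and that every surviving boundary point retains an escape $m$-plane threading past all the (finitely many, small) bridges, which is possible exactly because $m\leqslant n-2$; a secondary point needing care is that the slab approximation in the closed case really does shrink to a connected set enjoying all the stated properties.
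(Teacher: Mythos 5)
Your bridging step is where the proof breaks, and it breaks precisely in the flagship case. Look at a point $q$ on the rim sphere $\partial\beta_i\cap H_k$ of a bridge. Every $m$-plane through $q$ not contained in $H_k$ contains points arbitrarily close to $q$ lying off $H_k$; such points are either in the open ball $\beta_i$ or, being outside $\overline{\beta_i}$ and away from $\overline{P}$, $\partial B$ and the other hyperplanes, in one of the two components of $G$ that the bridge joins -- in either case in $E$. So an escape $m$-plane at $q$ must lie inside $H_k$, and to miss the open disc $\beta_i\cap H_k$ it must moreover lie inside the tangent $(n-2)$-plane to that disc at $q$ (an affine subspace through a boundary point of a ball misses the open ball only if it sits in the tangent hyperplane). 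For $m=n-2$ -- which is the \emph{only} admissible value when $n=3$, i.e.\ your base case $m=1$ -- the escape plane is therefore completely forced: it is that tangent $(n-2)$-plane. As $q$ runs over the rim, these tangent planes sweep out all of $H_k$ outside the disc, so as soon as a second bridge lies on the same hyperplane $H_k$, some rim point $q^{*}$ has its forced tangent plane passing through the other open bridge disc, hence through $E$; at $q^{*}$ there is no escape $m$-plane (nor half-plane) at all, and $E\notin\mathbf{WC^n_{n-2}}$. Two bridges on one hyperplane are unavoidable in your scheme: the $n+1$ facet hyperplanes cut $B\setminus\overline{P}$ into $2^{n+1}-2$ regions, so a spanning tree needs $2^{n+1}-3$ bridges spread over only $n+1$ hyperplanes. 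Thus the sentence ``an $m$-dimensional affine subspace through $q$ can be chosen to avoid any prescribed finite family of small balls -- take it generic, or tangent\ldots'' is exactly the unjustified step; finitely many small balls \emph{can} block all $m$-planes through a nearby point (that is the very mechanism creating nonconvexity points), and in the tangential situation above there is no freedom left to exploit. Even for $m<n-2$ a genericity appeal is not enough; one would need a quantitative smallness-and-separation argument for the bridges.

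The constructions in the cited papers (reproduced in spirit in Lemmas~\ref{lemm2} and~\ref{lemm7} of this paper) achieve connectivity by a different mechanism: start from a disconnected planar example $E^2\in\mathbf{WC^2_1}\setminus\mathbf{C^2_1}$, take (oblique) cylinders over it in the extra dimensions, and join the pieces by convex caps placed beyond the ends of the cylinders, so the components are connected ``around'' through the new dimensions rather than through holes punched in the cuts. Then every boundary point still possesses an escape line or ray lying either in an uncut hyperplane or in a slice parallel to the base where the section of the set is empty, convex, or congruent to $E^2$, while the nonconvexity points survive as a full prism over $P$; the closed connected examples are then obtained from such open ones, not by shrinking slabs around the hyperplanes $H_j$ of your arrangement. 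If you want to rescue your approach, the handles must avoid the cut hyperplanes altogether and use the ambient freedom $m\leqslant n-2$ outside them -- which essentially turns it back into the cylinder-plus-caps construction.
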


Of special interest are the properties of $m$-nonconvexity-point sets corresponding to weakly $m$-convex sets and $m$-nonsemiconvexity-point sets corresponding to weakly $m$-semiconvex sets. The following results were obtained.
\begin{lemma}[Osipchuk \cite{Osi2022_2, Osi2023}]\label{lemm3}
Suppose that an open subset $E\subset \mathbb{R}^2$ belongs to the class $\mathbf{WC^2_1}\setminus \mathbf{C^2_1}$. Let $E^{\triangle}_j$, $j\in N\subseteq\mathbb{N}$, be the components of $E^{\triangle}$. Then
 \begin{enumerate}
 \item[\textup{(a)}] $E^{\triangle}$ is open and weakly $1$-convex;
 \item[\textup{(b)}] $E^{\triangle}_j$, $j\in N$, are convex (bounded or unbounded);
 \end{enumerate}
 \end{lemma}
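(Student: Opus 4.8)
The plan is to reduce everything to one elementary observation: \emph{if a straight line $\ell$ satisfies $\ell\cap E=\varnothing$, then also $\ell\cap E^{\triangle}=\varnothing$}. Indeed, any $z\in\ell$ would be a point of $\mathbb{R}^2\setminus E$ through which passes a line (namely $\ell$) missing $E$, hence $z$ is not a $1$-nonconvexity point of $E$. I will also use that $E^{\triangle}\subseteq\mathbb{R}^2\setminus\overline E$: by Definition~\ref{def5}, $E^{\triangle}\subseteq\mathbb{R}^2\setminus E$, and by weak $1$-convexity of $E$ each point of $\partial E$ has a line through it missing $E$, so $\partial E\cap E^{\triangle}=\varnothing$; combining these gives $E^{\triangle}\subseteq\mathbb{R}^2\setminus\overline E$, and in particular $\overline{E^{\triangle}}\subseteq F:=\mathbb{R}^2\setminus E$.

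For the openness in (a), I would argue by contradiction: given $x_0\in E^{\triangle}$ and a sequence $x_k\to x_0$ with $x_k\notin E^{\triangle}$, the points $x_k$ eventually lie in the open set $\mathbb{R}^2\setminus\overline E$, hence $x_k\notin E$, so each such $x_k$ (not being a $1$-nonconvexity point) lies on a line $\ell_k=\{x_k+tv_k:t\in\mathbb{R}\}$, $v_k\in S^1$, with $\ell_k\cap E=\varnothing$. Passing to a subsequence with $v_k\to v$, the line $\ell=\{x_0+tv:t\in\mathbb{R}\}$ still misses $E$ — if $p=x_0+t_0v\in E$ then $x_k+t_0v_k\in\ell_k$ converges to $p$ and hence lies in the open set $E$ for large $k$, a contradiction — so $\ell$ is a line through $x_0$ avoiding $E$, contradicting $x_0\in E^{\triangle}$. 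Weak $1$-convexity of $E^{\triangle}$ then drops out of the observation: a boundary point $y$ of the open set $E^{\triangle}$ satisfies $y\notin E^{\triangle}$, and since $\overline{E^{\triangle}}\subseteq F$ also $y\notin E$, so some line through $y$ misses $E$ and therefore also misses $E^{\triangle}$.

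For (b), let $D$ be a component of the open set $E^{\triangle}$ and take $a,b\in D$; I want $[a,b]\subseteq D$. Working on the line $\ell_{ab}$ through $a$ and $b$, I would consider the component $I_a$ (an open sub-interval) of the relatively open set $E^{\triangle}\cap\ell_{ab}$ containing $a$. If $b\in I_a$, then $[a,b]\subseteq I_a\subseteq E^{\triangle}$ is connected and meets $D$, so $[a,b]\subseteq D$. Otherwise let $q$ be the endpoint of $I_a$ on the $b$-side; it is a finite point of $(a,b]$ with $q\notin E^{\triangle}$, and the case $q=b$ again gives $[a,b)\subseteq I_a$, $b\in E^{\triangle}$, hence $[a,b]\subseteq D$. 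So assume $q\in(a,b)$. Points of $I_a\subseteq E^{\triangle}\subseteq F$ accumulate at $q$, so $q$ has no neighbourhood contained in $E$; thus $q\notin E$, and since $q\notin E^{\triangle}$ there is a line $\ell\ni q$ with $\ell\cap E=\varnothing$. By the observation $\ell\cap E^{\triangle}=\varnothing$, so in particular $\ell\neq\ell_{ab}$ (the latter passes through $a\in E^{\triangle}$); hence $\ell\cap\ell_{ab}=\{q\}$, and as $q$ lies strictly between $a$ and $b$, the points $a$ and $b$ fall in the two different open half-planes bounded by $\ell$. But then the connected set $D\subseteq E^{\triangle}$, being disjoint from $\ell$ yet meeting both half-planes, is separated — impossible. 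Hence $[a,b]\subseteq D$, so $D$ is convex (bounded or not).

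The limit argument in (a) and the half-plane separation in (b) are routine; the step that actually carries the proof is the recognition in (b) that the point $q$ where $[a,b]$ leaves $E^{\triangle}$ cannot lie in $E$ — it is squeezed between points of $\mathbb{R}^2\setminus E$ — and therefore carries a line disjoint from $E$, hence (by the observation) disjoint from $E^{\triangle}$, which then separates $a$ from $b$ inside $E^{\triangle}$. I expect the main difficulty to be essentially bookkeeping: correctly pinning down the endpoint $q$ of $I_a$, verifying $q\notin E$ and $\ell\neq\ell_{ab}$, and phrasing the separation cleanly, rather than any deeper geometric obstacle.
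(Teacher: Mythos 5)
Your argument is correct, but it takes a genuinely different route from the one the paper uses for part (a), and it supplies a proof of part (b) that the paper does not contain (the lemma is quoted there from the references). For (a), the paper works in the general $\mathbb{R}^n$ setting: Lemma~\ref{lemm1} fixes $y\in E^{\triangle}$, chooses a ball $\overline{U_\alpha}\subset E$ on each line $\gamma_\alpha(y)$, uses the central projection and the Heine--Borel theorem to extract finitely many such balls whose projections cover the space of directions, and then takes a minimum of distance-to-boundary functions to get a \emph{uniform} radius $d(y)$; openness (Theorem~\ref{theor6_2}) follows by translating lines from $y$ to nearby points $z$, and weak $1$-convexity (Corollary~\ref{corol1_1}) by the same observation you use, that a line avoiding $E$ avoids $E^{\triangle}$. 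Your proof of openness instead argues by contradiction with a sequence $x_k\to x_0$, takes directions $v_k\in S^1$ of lines missing $E$, and passes to a limiting line; this is shorter, equally valid, and generalizes verbatim to $\mathbb{R}^n$ with $S^{n-1}$ in place of $S^1$, but it does not produce the quantitative uniform ball radius $d(y)$, which is exactly what the paper reuses later to handle the closed-set statements (Theorems~\ref{theor7} and~\ref{theor7_1}); so the paper's heavier Lemma~\ref{lemm1} buys additional mileage beyond openness. For (b), your separation argument is sound: the endpoint $q$ of the maximal interval $I_a$ is a limit of points of $E^{\triangle}\subset\mathbb{R}^2\setminus E$, hence $q\notin E$; being outside $E^{\triangle}$ as well, it carries a line $\ell$ with $\ell\cap E=\varnothing$, hence $\ell\cap E^{\triangle}=\varnothing$, and since $\ell\ne\ell_{ab}$ meets $\ell_{ab}$ only at $q$, it separates $a$ from $b$, contradicting connectedness of the component. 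It is worth noting that this step is intrinsically two-dimensional (a line separates the plane but not $\mathbb{R}^n$ for $n\geqslant3$), which is consistent with the paper's final theorem exhibiting non-convex $1$-nonconvexity-point sets in higher dimensions.
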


\begin{lemma}[Osipchuk \cite{Osi2024}]\label{lemm4}
Suppose that an open subset $E\subset \mathbb{R}^2$ belongs to the class $\mathbf{WS^2_1}\setminus \mathbf{S^2_1}$. Let $E^{\diamondsuit}_j$, $j\in N\subseteq\mathbb{N}$, be the components of $E^{\diamondsuit}$. Then
 \begin{enumerate}
 \item[\textup{(a)}] $E^{\diamondsuit}$ is open and weakly $1$-semiconvex;
 \item[\textup{(b)}] $E^{\diamondsuit}_j$, $j\in N$, are convex and bounded;
 \item[\textup{(c)}] any connected subset of $\partial E^{\diamondsuit}_j$, $j\in N$, consisting of only smooth points is a line segment or a point;
 \item[\textup{(d)}]  there exists a collection of rays $\left\{\eta^{k}\right\}_{k\in M}$, $M\subseteq\mathbb{N}$, such that
 \begin{enumerate}
     \item[$\bullet$] $\bigcup\limits_k \eta^{k}\supset \partial E^{\diamondsuit},$
 \item[$\bullet$] the set $\bigcup\limits_k \eta^{k}\bigcup E^{\diamondsuit}$ does not contain rays emanating from $E^{\diamondsuit}$,
\item[$\bullet$] $\bigcup\limits_k \eta^{k}\bigcap E=\varnothing.$
 \end{enumerate}
 \end{enumerate}
 \end{lemma}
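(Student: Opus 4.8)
I would carry everything through the description
\[
E^{\diamondsuit}=\mathbb{R}^2\setminus(\overline{E}\cup W),\qquad
W:=\bigcup\{\,\rho:\ \rho\ \text{a ray with}\ \rho\cap E=\varnothing\,\},
\]
``$W$'' being the union of all \emph{free} rays. This identity holds because $x\notin E^{\diamondsuit}$ exactly when $x\in E$ or some ray emanating from $x$ misses $E$, and the latter is equivalent to $x$ lying on a free ray (pass to the sub-ray issuing from $x$); the replacement of $E$ by $\overline{E}$ uses that weak $1$-semiconvexity of $E$ forces $\partial E\subseteq W$. Note $E^{\diamondsuit}\subseteq\mathbb{R}^2\setminus\overline{E}$, hence $\overline{E^{\diamondsuit}}\subseteq\mathbb{R}^2\setminus E$; and (orienting the picture, not strictly needed) $E^{\diamondsuit}=E^{\triangle}\setminus W$, since a free line is a union of free rays.

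\emph{Part (a).} First I would show $W$ is closed: if $x_k\in W$, $x_k\to x$, write a free ray through $x_k$, reduced to issue from $x_k$, as $\{x_k+t\theta_k:t\geqslant0\}\subseteq\mathbb{R}^2\setminus E$ with $\theta_k\in S^1$; passing to a subsequence $\theta_k\to\theta$, closedness of $\mathbb{R}^2\setminus E$ gives $\{x+t\theta:t\geqslant0\}\subseteq\mathbb{R}^2\setminus E$, so $x\in W$. Hence $\overline{E}\cup W$ is closed and $E^{\diamondsuit}$ is open. For weak $1$-semiconvexity of $E^{\diamondsuit}$: any $x\in\partial E^{\diamondsuit}$ satisfies $x\notin E$ (as $\overline{E^{\diamondsuit}}\cap E=\varnothing$) and $x\notin E^{\diamondsuit}$, so $x$ is not a $1$-nonsemiconvexity point of $E$ and there is a ray $r\ni x$ with $r\cap E=\varnothing$; and $r\cap E^{\diamondsuit}=\varnothing$ too, for a point $y\in r\cap E^{\diamondsuit}$ would have the sub-ray of $r$ from $y$ as a free ray, forcing $y\in W$, a contradiction.

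\emph{Part (b).} For convexity, take $a,b$ in one component $E^{\diamondsuit}_j$ and suppose $[a,b]\not\subseteq E^{\diamondsuit}$; let $c$ be the first point of the segment after $a$ that is not in $E^{\diamondsuit}$, so $[a,c)\subseteq E^{\diamondsuit}_j$, $c\notin E$, $c\notin E^{\diamondsuit}$, hence $c\in W$. A free ray $\rho$ from $c$ cannot have direction $\pm(b-a)$ (otherwise $a$ or $b$ would lie on $\rho\subseteq W$), so the line $\ell\supseteq\rho$ differs from $\mathrm{line}(a,b)$ and therefore strictly separates $a$ from $b$. If some free \emph{line} passes through $c$, take it as $\ell$: every path from $a$ to $b$ in $E^{\diamondsuit}_j$ meets $\ell\subseteq W\subseteq\mathbb{R}^2\setminus E^{\diamondsuit}$, which is absurd — this is exactly the separation argument available in the $1$-convex situation of Lemma~\ref{lemm3}. \emph{The main obstacle is the remaining case}, where no free line passes through $c$ (equivalently, $c$ lies on a free ray but every line through $c$ meets $E$): now $\rho$ does not disconnect the plane, so the separation trick fails outright. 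Here one must argue that a path from $a$ to $b$ inside $E^{\diamondsuit}_j$ can cross $\ell$ only along the \emph{open} ray opposite to $\rho$, that this opposite ray necessarily meets $E$ (even just $\overline E$), and that $[a,c)\subseteq E^{\diamondsuit}_j$ consists of $1$-nonsemiconvexity points of $E$ while $\rho$ escapes to infinity missing $E$; combining these to show the path cannot simultaneously skirt $c$ and the point of $E$ on the opposite ray is the technical heart of the lemma. Granting convexity, boundedness is immediate: were $E^{\diamondsuit}_j$ unbounded, its closure would contain a ray $\{q_0+tv:t\geqslant0\}$ with $q_0\in E^{\diamondsuit}_j$, and since $\overline{E^{\diamondsuit}_j}\subseteq\mathbb{R}^2\setminus E$ this ray would be free, contradicting that $q_0\in E^{\diamondsuit}$ is a $1$-nonsemiconvexity point of $E$.

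\emph{Parts (c), (d).} For $x\in\partial E^{\diamondsuit}_j$ the free ray from part (a) misses $E^{\diamondsuit}$, hence cannot enter the convex set $E^{\diamondsuit}_j$, so it lies on a supporting line of $E^{\diamondsuit}_j$ at $x$ — at a \emph{smooth} point $x$ the unique tangent line $T_x$. Consequently, along a connected set of smooth boundary points the tangent direction takes only the (at most countably many) directions realized by such free rays; to conclude it is locally constant, hence constant by continuity, so the arc is a line segment, one must still exclude a strictly convex smooth arc, which is done by a local argument at such an arc: interior points of $E^{\diamondsuit}_j$ tending to it are $1$-nonsemiconvexity points of $E$, yet the tangent free rays run off to infinity missing $E$, and these two facts are irreconcilable along a genuinely curved arc. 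This gives (c). For (d), by (c) the set $\partial E^{\diamondsuit}=\bigcup_j\partial E^{\diamondsuit}_j$ is a countable union of line segments — the sides of the convex ``generalized polygons'' $E^{\diamondsuit}_j$; I would take $\{\eta^k\}_{k\in M}$, $M\subseteq\mathbb{N}$, to be free rays running along the lines carrying these segments and covering them (the free ray through a relative-interior point of a side runs along that side's line, so countably many such rays suffice). Then $\bigcup_k\eta^k\supseteq\partial E^{\diamondsuit}$ by construction; $\bigcup_k\eta^k\cap E=\varnothing$ since each $\eta^k$ is free; and $\bigcup_k\eta^k\cup E^{\diamondsuit}$ contains no ray issuing from a point of $E^{\diamondsuit}$, because such a ray would have to leave the convex polygon $E^{\diamondsuit}_j$ across a side transversally and thus leave every $\eta^k$.
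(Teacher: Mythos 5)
First, note that this paper does not prove Lemma~\ref{lemm4} at all: it is quoted from \cite{Osi2024}, and what the present paper proves are the $n$-dimensional generalizations of part (a) (Theorem~\ref{theor6} and Corollary~\ref{corol1}, via the uniform-ball compactness construction of Lemma~\ref{lemm1}) and of the last two bullets of (d) (Proposition~\ref{theor8}). Your part (a) is correct and follows a genuinely different, and in fact cleaner, route: the union $W$ of all rays missing $E$ is closed (limit of base points and of a convergent subsequence of directions, plus closedness of $\mathbb{R}^2\setminus E$), $\partial E\subset W$ by weak $1$-semiconvexity, so $E^{\diamondsuit}=\mathbb{R}^2\setminus(\overline{E}\cup W)$ is open; this argument works verbatim in $\mathbb{R}^n$ and would replace Lemma~\ref{lemm1} in the proof of Theorem~\ref{theor6}. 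Your semiconvexity step coincides with Corollary~\ref{corol1}, and boundedness-given-convexity is fine.

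The genuine gaps are in (b), (c) and (d). For (b) you do not prove convexity of the components: you reduce to the point $c\in(a,b)\cap W$ and handle only the case when a free \emph{line} passes through $c$, explicitly leaving open the case when every line through $c$ meets $E$ (a free ray does not separate the plane), and you yourself call this unresolved case ``the technical heart'' --- so the key step of (b) is missing, and everything in (c) and (d) that leans on convexity inherits this gap. In (c) the decisive step, excluding a strictly convex smooth arc, is only asserted (``these two facts are irreconcilable''); no argument is given for why the existence at each arc point of a tangent free ray contradicts anything, and this again is the substantive content of the claim. In (d), covering $\partial E^{\diamondsuit}$ by countably many rays along the \emph{sides} does not suffice: the components can be generalized polygons whose vertices accumulate at boundary points lying on no side (exactly the situation of Figure~\ref{Fig18}, where the construction adds rays through the accumulation points of vertices), so your family need not satisfy $\bigcup_k\eta^{k}\supset\partial E^{\diamondsuit}$, and vertices themselves are not covered by rays chosen only at relative-interior points of sides. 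Finally, your argument for the second bullet of (d) (a ray ``must leave a side transversally'') is both incomplete (a ray may run along a side) and unnecessary: as in Proposition~\ref{theor8}, any ray contained in $\bigcup_k\eta^{k}\cup E^{\diamondsuit}$ misses $E$, contradicting the fact that its initial point is a $1$-nonsemiconvexity point of $E$.
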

 In other words, Lemma~\ref{lemm4} shows that the $1$-nonsemiconvexity-point set corresponding  to a flat weakly $1$-semiconvex set is the union of open convex polygons and open convex generalized polygons. But they cannot be arbitrarily placed in the plane. Their arrangement is constrained by property (d).

 The methods developed to prove item (a) in Lemmas \ref{lemm3} and \ref{lemm4} allow us to obtain the following result for the closed weakly $1$-convex (weakly $1$-semiconvex) sets in the plane.

 \begin{lemma}[Osipchuk \cite{Osi2024, Osi2023}]\label{lemm6}
Let $E\subset \mathbb{R}^2$ be a closed subset such that $\mathrm{Int}\, E\ne\varnothing$. If $E$ is weakly $1$-convex (weakly $1$-semiconvex), then $\mathrm{Int}\, E$ is weakly $1$-convex (weakly $1$-semiconvex).
\end{lemma}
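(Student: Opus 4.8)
The plan is to verify Definition~\ref{def3} directly for the open set $\mathrm{Int}\,E$, treating the weakly $1$-convex and the weakly $1$-semiconvex cases in one stroke: below, read ``$\Gamma$'' as ``straight line'' in the first case and as ``closed ray'' in the second. Fix $x\in\partial(\mathrm{Int}\,E)$. Since $E$ is closed, $\overline{\mathrm{Int}\,E}\subseteq E$, so $\partial(\mathrm{Int}\,E)\subseteq E\setminus\mathrm{Int}\,E=\partial E$; in particular $x\in\partial E$, so there is a sequence $y_j\to x$ with $y_j\notin E$. Fix also, by Definition~\ref{def4}, an approximating family of open weakly $1$-convex (weakly $1$-semiconvex) sets $A_k$ with $\overline{A}_{k+1}\subseteq A_k$ and $E=\bigcap_k A_k$; these are nested decreasing, and each contains $E$.

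First I would pass from $x$ to suitable boundary points of deep approximants. For each $j$ let $m_j$ be the least index with $y_j\notin A_{m_j}$ (finite, since $y_j\notin\bigcap_k A_k$). For any $N$, openness of $A_N$ together with $x\in E\subseteq A_N$ gives a ball about $x$ inside $A_N$, hence $y_j\in A_N\subseteq A_1\cap\cdots\cap A_N$ for all large $j$, so $m_j>N$ eventually; thus $m_j\to\infty$. Since $x\in A_{m_j}$ and $y_j\notin A_{m_j}$, the segment $[x,y_j]$ meets $\partial A_{m_j}$; let $z_j$ be the point of $[x,y_j]\cap\partial A_{m_j}$ closest to $x$, so that $|z_j-x|\leqslant|y_j-x|\to0$, i.e.\ $z_j\to x$. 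Now weak $1$-convexity (weak $1$-semiconvexity) of $A_{m_j}$ applied at $z_j\in\partial A_{m_j}$ yields, by Definition~\ref{def3}, a straight line (closed ray) $\Gamma_j$ through $z_j$ with $\Gamma_j\cap A_{m_j}=\varnothing$; in the semiconvex case I replace $\Gamma_j$ by one of its closed subrays with endpoint $z_j$. Writing $\Gamma_j$ as the line (ray) through $z_j$ with unit direction $d_j$ and passing to a subsequence with $d_j\to d$ (compactness of the unit sphere), I obtain a line (ray) $\Gamma$ through $x$ with direction $d$.

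The last step, which is the heart of the matter, is to show $\Gamma\cap\mathrm{Int}\,E=\varnothing$. The key point is that $\mathrm{Int}\,E$ sits ``robustly'' inside every approximant: if $w\in\mathrm{Int}\,E$, then some ball $B(w,\varepsilon)$ lies in $E$, hence in $A_k$ for \emph{all} $k$, in particular in $A_{m_j}$ for every $j$. So if some $w=x+t_0 d$ belonged to $\Gamma\cap\mathrm{Int}\,E$, the points $w_j:=z_j+t_0 d_j\in\Gamma_j$ would converge to $w$ and hence lie in $B(w,\varepsilon)\subseteq A_{m_j}$ for all large $j$ --- contradicting $\Gamma_j\cap A_{m_j}=\varnothing$. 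Thus $\Gamma$ is a line (closed ray) through $x\in\partial(\mathrm{Int}\,E)$ avoiding $\mathrm{Int}\,E$, which is exactly what Definition~\ref{def3} requires; hence $\mathrm{Int}\,E$ is weakly $1$-convex (weakly $1$-semiconvex).

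I expect the real obstacle to be the bookkeeping that makes this limiting argument legitimate rather than any single hard estimate: one must force the approximant index $m_j$ to tend to infinity so that the chosen boundary points $z_j$ simultaneously converge to $x$ and lie on sets $A_{m_j}$ that still contain a whole ball around each point of $\mathrm{Int}\,E$ --- this ``uniform thickness'' of $\mathrm{Int}\,E$ inside the $A_k$ is precisely what propagates to the limit. The compactness of the direction sphere makes the passage to a limiting line (ray) automatic, and since no step uses $n=2$, the same proof gives the corresponding statement in $\mathbb{R}^n$.
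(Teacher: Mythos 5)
Your argument is correct, and it is genuinely different from the route the paper takes (the paper proves the $\mathbb{R}^n$ versions of this lemma in Theorems~\ref{theor7} and~\ref{theor7_1}). The paper argues by contradiction: it assumes $\mathrm{Int}\,E$ has a $1$-nonsemiconvexity ($1$-nonconvexity) point $y\in\partial E$, invokes Lemma~\ref{lemm1} --- a covering/compactness argument over the sphere of directions that yields points $x_\alpha\in\eta_\alpha(y)\cap\mathrm{Int}\,E$ and a \emph{uniform} radius $d>0$ with $U(x_\alpha,d)\subset\mathrm{Int}\,E$ --- and then shows that boundary points $z_k\in\partial G_k\cap U(y,d)$ of the outer approximants would themselves be $1$-nonsemiconvexity points of the open weakly $1$-semiconvex sets $G_k$, a contradiction. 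You instead verify Definition~\ref{def3} directly at each $x\in\partial(\mathrm{Int}\,E)$: you produce boundary points $z_j$ of approximants converging to $x$, take the lines (rays) supplied by weak $1$-convexity (semiconvexity) of the approximants at $z_j$, extract a convergent subsequence of directions, and use the openness of $\mathrm{Int}\,E$ (a ball around any interior point lies in every $A_k$) to see that the limit line (ray) through $x$ avoids $\mathrm{Int}\,E$. Your proof avoids Lemma~\ref{lemm1} entirely, needing only compactness of $S^{n-1}$ for the limit direction, and is direct rather than by contradiction; the paper's route has the advantage of reusing the uniformity machinery of Lemma~\ref{lemm1} that also drives Theorems~\ref{theor6} and~\ref{theor6_2}. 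One small remark: your insistence that $m_j\to\infty$ is not actually needed --- since $E\subseteq A_k$ for \emph{every} $k$, any index $k_j$ with $y_j\notin A_{k_j}$ serves, because the interior ball $B(w,\varepsilon)\subseteq E$ lies in all approximants; the bookkeeping is harmless but superfluous. As you note, nothing in the argument uses $n=2$, so you in fact recover the paper's Theorems~\ref{theor7} and~\ref{theor7_1}.
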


In this study, we focus on establishing the general topological properties of the $1$-nonconvexity-point set corresponding to an open weakly $1$-convex set and the $1$-nonsemiconvexity-point set corresponding to an open weakly $1$-semiconvex set in $\mathbb{R}^n$, $n\geqslant 2$.

First, we prove that {\it The $1$-nonsemiconvexity-point set $E^{\diamondsuit}$ corresponding to an open  set $E\in \mathbf{WS^n_1}\setminus \mathbf{S^n_1}$, $n\geqslant 2$, is open}. Therefore, we generalize Lemma~\ref{lemm4} (a) on the real Euclidean space of any dimension $n\geqslant 2$. The proof algorithm is similar to the proof of Lemma~\ref{lemm4} (a). Its essence is to find, for every fixed point $y\in E^{\diamondsuit}$ and each ray emanating from $y$,  points $x_\alpha(y)$, $\alpha\in S^{n-1}$, on these rays, and the number $d(y)>0$ such that the points $x_\alpha(y)$ are contained in $E$ together with open balls of the same radii $d(y)$. This allows us to assert that any ray emanating from an open ball with center at $y$ and radius $\varepsilon\le d(y)$ intersects the union of the balls contained in $E$. Thus, we show that $y$ is an inner point of $E^{\diamondsuit}$.

To find $x_\alpha(y)\in E\subset \mathbb{R}^2$, $\alpha\in [0,2\pi]$, it was used  the connectedness of the components of $E$. Namely, there were constructed the finite number of curves contained in $E$ and such that every ray emanating from $y$ intersects the union of the curves. Moreover, it was shown that points $x_\alpha(y)$ are actually placed on those curves and $d(y)$ is the minimum value of the restrictions of the distance functions defined on the components of $E$ to the respective curves. This trick fails for the set $E$ in the spaces of higher dimensions, obviously. But we are lucky to find not one-dimensional compacts that meet our requirements.

Using the same algorithm, we also prove that {\it The $1$-nonconvexity-point set $G^{\triangle}$ corresponding to an open set $G\in \mathbf{WC^n_1}\setminus \mathbf{C^n_1}$, $n\geqslant2$, is open}. But in this case, we show that, for every fixed point $y\in G^{\triangle}$ and each straight line passing through $y$,  there exist points $x_\alpha(y)$, $\alpha\in S^{n-1}$, on these lines, and the number $d(y)>0$ such that the points $x_\alpha(y)$ are contained in $G$ together with open balls of the same radii $d(y)$.

The consequents of these two statements are that $E^{\diamondsuit}$ is weakly $1$-semiconvex  for $E\in\mathbf{WS^n_1}\setminus \mathbf{S^n_1}$ and $G^{\triangle}$ is weakly $1$-convex for $G\in\mathbf{WC^n_1}\setminus \mathbf{C^n_1}$, $n\geqslant2$.

The methods developed to prove the first two results allow us to generalize Lemma~\ref{lemm6} on closed weakly $1$-semiconvex and closed weakly $1$-convex sets in $\mathbb{R}^n$, $n\geqslant2$.

Property (d) of Lemma~\ref{lemm4} easily extends to all spaces with dimensions $n\geqslant2$, and is also inherent to weakly $1$-convex sets of those spaces.

Our final result refutes the validity of Lemma~\ref{lemm3} (b) and Lemma~\ref{lemm4} (b) for the spaces $\mathbb{R}^n$, $n\geqslant3$, as we construct examples of simultaneously weakly $1$-convex and weakly $1$-semiconvex open sets in $\mathbb{R}^n$, $n\geqslant3$, which non-empty $1$-nonconvexity-point sets are non-convex, bounded (or unbounded) and coincide with their $1$-nonsemiconvexity-point sets.

We give here briefly some directions for further research. Probably the most natural next question to study would be to investigate the general topological properties of $E_m^{\diamondsuit}$, $E\in\mathbf{WS^n_m}\setminus \mathbf{S^n_m}$,  and $G_m^{\triangle}$, $G\in\mathbf{WC^n_m}\setminus \mathbf{C^n_m}$, for $n\geqslant2$, $m\geqslant1$.  We expect $E_m^{\diamondsuit}$ and $G_m^{\triangle}$ to be open (closed) if $E$ and $G$ are open (closed). In addition, the question of estimating the number of components of the sets belonging to the class $\mathbf{WS^n_{n-1}}\setminus \mathbf{S^n_{n-1}}$ remains open for $n>2$.

\section{Main results}

Given two points $x,y\in\mathbb{R}^n$, we will denote by  $xy$ the open line segment between those points and by $\|x-y\|$ its length. Let also
$$U(y,\varepsilon):=\{x\in\mathbb{R}^n:\|x-y\|<\varepsilon\},\quad y\in\mathbb{R}^n,\quad \varepsilon>0;$$
$$S^{n-1}:=\{z\in \mathbb{R}^n: \|z\|=1\};$$
$$\eta_\alpha(y):=\{t\alpha+y: t\in[0,+\infty)\},\quad \alpha\in S^{n-1},\quad y\in\mathbb{R}^n;$$
$$
\gamma_{\alpha}(y):=\{t\alpha+y: t\in(-\infty,+\infty)\},\quad
\alpha\in S^{n-1},\quad y\in\mathbb{R}^n.
$$
\begin{lemma}\label{lemm1}
Suppose that a subset $E\subset \mathbb{R}^n$ is open and $E^{\diamondsuit}\ne\varnothing$ ($E^{\triangle}\ne\varnothing$). Let $y\in E^{\diamondsuit}$ ($y\in E^{\triangle}$). Then  for any ray $\eta_\alpha(y)$ (for any straight line $\gamma_\alpha(y)$), $\alpha\in S^{n-1}$, there exist points $x_\alpha(y)\in \eta_\alpha(y)\cap E$ ($x_\alpha(y)\in \gamma_\alpha(y)\cap E$) such that $U(x_\alpha(y),d(y))\subset E$, where $d(y)>0$ depends on only $y$ and does not depend on $\alpha$.
\end{lemma}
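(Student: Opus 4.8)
The plan is to run a compactness argument on the sphere of directions $S^{n-1}$. I will spell out the case $y\in E^{\diamondsuit}$; the case $y\in E^{\triangle}$ is word for word the same once the rays $\eta_\alpha(y)$ are replaced by the lines $\gamma_\alpha(y)$ and the scalar parameter introduced below is allowed to be negative. So fix $y\in E^{\diamondsuit}$. By Definition~\ref{def5}, for every $\alpha\in S^{n-1}$ the ray $\eta_\alpha(y)$ meets $E$; moreover $y\notin E$ (since $E^{\diamondsuit}\subset\mathbb{R}^n\setminus E$) and $E$ is open, so I may fix a number $t_\alpha>0$ with $p_\alpha:=t_\alpha\alpha+y\in E$ (the inequality $t_\alpha>0$ uses $y\notin E$), and then a radius $r_\alpha>0$ with $U(p_\alpha,r_\alpha)\subset E$.

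The key observation, call it the \emph{shadow step}, is that the single ball $U(p_\alpha,r_\alpha)$ already serves a whole neighbourhood of directions with one and the same radius. Put $V_\alpha:=\{\beta\in S^{n-1}: t_\alpha\|\beta-\alpha\|<r_\alpha/2\}$, an open neighbourhood of $\alpha$ in $S^{n-1}$. For $\beta\in V_\alpha$ the point $q_\beta:=t_\alpha\beta+y$ lies on $\eta_\beta(y)$ and satisfies $\|q_\beta-p_\alpha\|=t_\alpha\|\beta-\alpha\|<r_\alpha/2$, whence $U(q_\beta,r_\alpha/2)\subset U(p_\alpha,r_\alpha)\subset E$ by the triangle inequality. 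Now $\{V_\alpha\}_{\alpha\in S^{n-1}}$ is an open cover of the compact set $S^{n-1}$, so it admits a finite subcover $V_{\alpha_1},\dots,V_{\alpha_k}$. I will then set $d(y):=\tfrac12\min\{r_{\alpha_1},\dots,r_{\alpha_k}\}>0$, a quantity depending on $y$ alone. Given an arbitrary $\alpha\in S^{n-1}$, I pick an index $i$ with $\alpha\in V_{\alpha_i}$ and put $x_\alpha(y):=t_{\alpha_i}\alpha+y$; then $x_\alpha(y)\in\eta_\alpha(y)$, and the shadow step applied to the pair $(\alpha_i,\alpha)$ gives $U(x_\alpha(y),r_{\alpha_i}/2)\subset E$. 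Since $d(y)\le r_{\alpha_i}/2$, this yields $x_\alpha(y)\in\eta_\alpha(y)\cap E$ together with $U(x_\alpha(y),d(y))\subset E$, which is exactly the assertion.

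The one genuine difficulty is the uniformity of $d(y)$ over all directions $\alpha$: a naive choice of one intersection point per direction produces balls whose radii may shrink to $0$ as $\alpha$ varies, because the chosen points may approach $\partial E$. The shadow step is precisely what defeats this, by trading the maximal ball at a point for a fixed fraction of it that survives for an entire open set of directions; compactness of $S^{n-1}$ then upgrades this ``locally uniform'' bound to a uniform one. For the straight-line case one changes only $t_\alpha>0$ into $t_\alpha\neq0$ (again using $y\notin E$) and writes $|t_\alpha|$ in place of $t_\alpha$ in the definition of $V_\alpha$; every other step carries over verbatim.
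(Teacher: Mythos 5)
Your proof is correct, and at heart it is the same argument as the paper's: attach to each direction $\alpha$ a ball about a chosen point of $\eta_\alpha(y)\cap E$ (resp.\ $\gamma_\alpha(y)\cap E$), turn these balls into an open cover of the compact sphere of directions, pass to a finite subcover, and take a minimum over the finitely many indices to get a $d(y)$ independent of $\alpha$. Where you differ is in the two technical sub-steps. The paper realizes the cover as the central projections $\sigma(U_\alpha)$ of the balls from $y$ onto $S^{n-1}$ (onto $\mathbb{R}\mathbf{P}^{n-1}$ in the line case), and then extracts the uniform radius by restricting the distance-to-boundary functions of the components $E_{i}$ to the compact closures $\overline{U_{\alpha_j}}$ and minimizing; your ``shadow step'' replaces both devices by the explicit neighborhood $V_\alpha=\{\beta: t_\alpha\|\beta-\alpha\|<r_\alpha/2\}$ and the half-radius/triangle-inequality trick, which hands you the uniform radius $r_{\alpha_i}/2$ directly, with no appeal to distance functions, to the connected components of $E$, or to openness of the projection $\sigma$. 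Both routes are sound; yours is a bit more elementary and self-contained (everything is a one-line estimate), while the paper's projection picture makes it geometrically transparent that every ray (line) through $y$ must meet one of the finitely many chosen compact balls. Your remarks on why $t_\alpha>0$ (resp.\ $t_\alpha\neq0$), using $y\notin E$, and on how the line case only changes $t_\alpha$ to $|t_\alpha|$, are exactly the points that need saying.
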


\begin{proof}[Proof.]

Let $O\in \mathbb{R}^n$ be the origin.
Consider the homeomorphism $\phi: \mathbb{R}^n\setminus \{O\}\rightarrow S^{n-1}\times (0,+\infty)$ defined by the formula
$$
\phi(z):=\left(\dfrac{z}{\|z\|},\|z\|\right).
$$
Let also $\sigma: S^{n-1}\times (0,+\infty)\rightarrow S^{n-1}$ be the central projection on the sphere, i.e.,
$$
\sigma(z):=\dfrac{z}{\|z\|}.
$$
Then $\sigma$ is open.

Fix an arbitrary point $y\in E^{\diamondsuit}$ ($y\in E^{\triangle}$). Without loss of generality, suppose that  $y=O$.

Let $z_\alpha$ be an arbitrary fixed point of $\eta_\alpha(y)\cap E$ (of $\gamma_{\alpha}(y)\cap E$),  $\alpha\in S^{n-1}$. Since $E$ is open, there exist open balls $U_\alpha:=U\left(z_\alpha,\varepsilon_\alpha\right)$,  $\alpha\in S^{n-1}$, such that  $\overline{U_\alpha}\subset E$.  Then the images $\sigma(U_\alpha)$, $\alpha\in S^{n-1}$, are open subsets of $S^{n-1}$ (open subsets of the projective space $\mathbb{R}\mathbf{P}^{n-1}$).
Moreover,
$$
\bigcup\limits_{\alpha\in S^{n-1}} \sigma(U_\alpha)
$$
is a cover of the unit sphere $S^{n-1}$ (of the projective space $\mathbb{R}\mathbf{P}^{n-1}$).
By the Heine-Borel theorem, there exists a subcover
$$
\bigcup\limits_{j\in M}
\sigma(U_{\alpha_j}),\quad \alpha_j\in S^{n-1},\quad j\in M,\quad M\,\, \mbox{is finite},
$$
of $S^{n-1}$ (of $\mathbb{R}\mathbf{P}^{n-1}$).

\begin{figure}[h]
	\centering
   \includegraphics[width=6 cm]{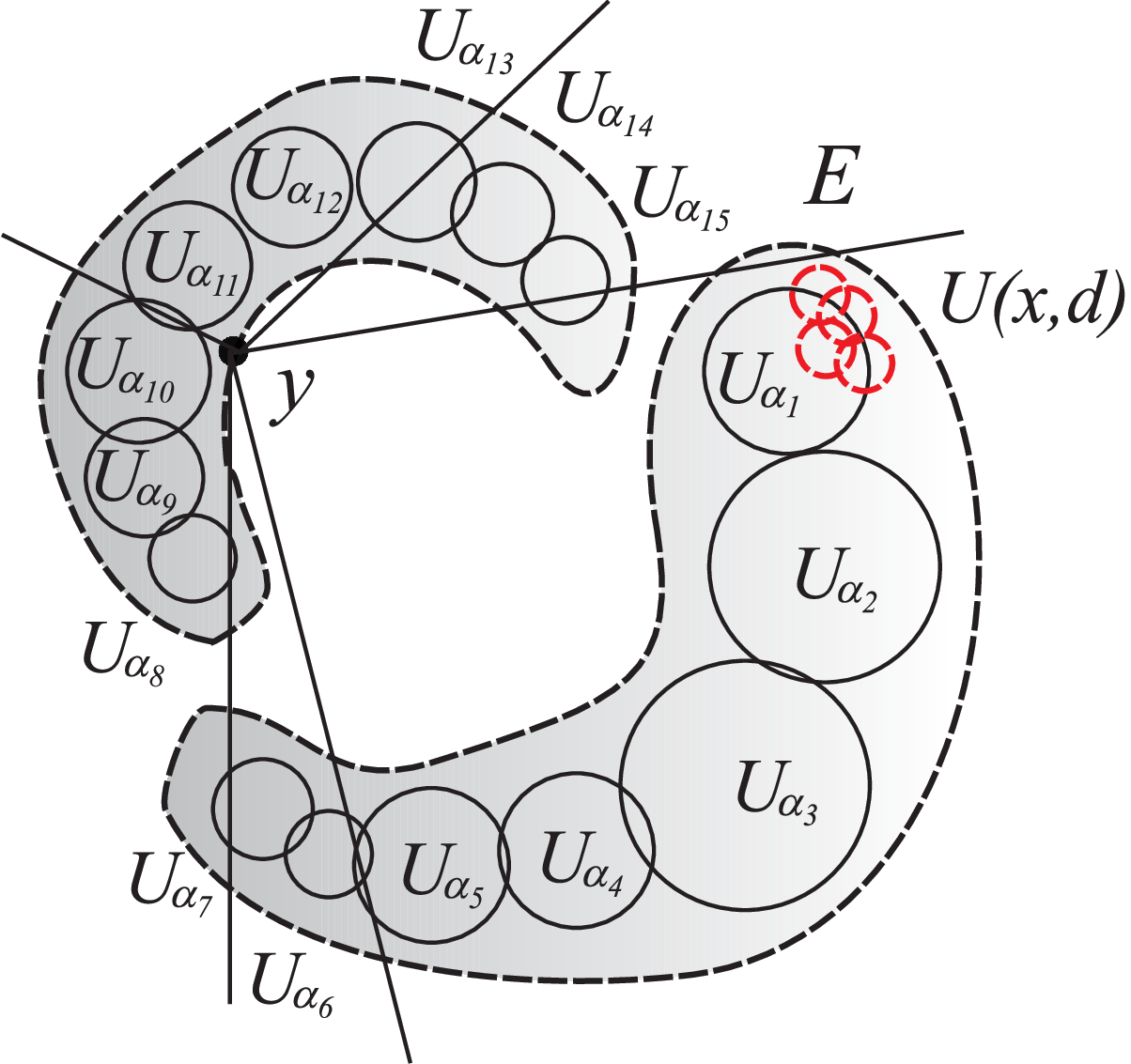}
	\caption{}\label{Fig5}
\end{figure}

 Let $E_i$, $i\in N\subseteq \mathbb{N}$, be the components of $E$. Then for any $j\in M$ there exists $i(j)\in N$ such that $\overline{U_{\alpha_j}}\subset E_{i(j)}$.

 Consider the distance functions
 $$d_i(x):=\inf\limits_{x^0\in \partial E_i}\|x-x^0\|,\quad x\in E_i,\quad i\in N.$$
  They are continuous  in the domains $E_i$, $i\in N$. Then their restrictions to the compacts $\overline{U_{\alpha_j}}$ attain their minimum values $d_{j}>0$ on that compacts, i.e.,
 $$
 d_{j}:=\min\limits_{x\in \overline{U_{\alpha_j}}}d_{i(j)}(x),\quad j\in M.
 $$
 Since $M$ is finite, there exists
 $$
 d:=\min\limits_{j\in M}d_j>0.
 $$
Then $U\left(x,d\right)\subset E$ for any point $x\in \overline{U_{\alpha_j}}$, $j\in M$; see Figure \ref{Fig5}. And for any $\alpha\in S^{n-1}$, there exists $j\in M$ such that  $\eta_\alpha(y)\cap  \overline{U_{\alpha_j}}\ne\varnothing$ ($\gamma_\alpha(y)\cap  \overline{U_{\alpha_j}}\ne\varnothing$) by the construction.
  \end{proof}

  \begin{theorem}\label{theor6}
Suppose that an open subset $E\subset \mathbb{R}^n$ belongs to the class $\mathbf{WS^n_1}\setminus \mathbf{S^n_1}$. Then $E^{\diamondsuit}$ is open.
  \end{theorem}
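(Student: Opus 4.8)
The plan is to fix an arbitrary point $y\in E^{\diamondsuit}$ and show that a whole ball $U(y,\varepsilon)$ around it lies in $E^{\diamondsuit}$, i.e., every ray emanating from any point of $U(y,\varepsilon)$ meets $E$. The key tool is Lemma~\ref{lemm1}: applied at $y$, it produces for each direction $\alpha\in S^{n-1}$ a point $x_\alpha(y)\in\eta_\alpha(y)\cap E$ together with a uniform radius $d(y)>0$ such that $U(x_\alpha(y),d(y))\subset E$. So I would set $\varepsilon:=d(y)$ (or any positive number no larger than it) and claim $U(y,\varepsilon)\subset E^{\diamondsuit}$.

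The main geometric step is the following: take any $y'\in U(y,\varepsilon)$ and any direction $\beta\in S^{n-1}$; I must find a point of $E$ on the ray $\eta_\beta(y')$. The idea is that the ray from $y'$ in direction $\beta$ is ``close'' to the parallel ray from $y$, and the latter hits the ball $U(x_\beta(y),\varepsilon)$ which is contained in $E$. More precisely, since $\|y-y'\|<\varepsilon$, the ray $\eta_\beta(y')$ stays within distance $\varepsilon$ of the ray $\eta_\beta(y)$ (translation by a vector of length $<\varepsilon$); as $\eta_\beta(y)$ passes through the center $x_\beta(y)$ of a ball of radius $\varepsilon=d(y)$ lying in $E$, the translated ray $\eta_\beta(y')$ must intersect that ball, hence meets $E$. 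I would justify the intersection by an elementary estimate: the foot of the perpendicular argument shows the point $x_\beta(y)+(y'-y)$, which lies on $\eta_\beta(y')$ provided the relevant parameter $t$ is nonnegative — and here one has to be slightly careful, because a small perturbation of the base point could in principle push the needed parameter below zero. To handle this cleanly, one chooses $x_\beta(y)$ far enough out along the ray; but Lemma~\ref{lemm1} as stated only asserts existence of \emph{some} $x_\beta(y)$. The fix is to observe that in the proof of Lemma~\ref{lemm1} the point actually lies in one of finitely many balls $\overline{U_{\alpha_j}}$, and one may freely replace $x_\beta(y)$ by any point of that ball (the whole ball is in $E$ with margin $d$); alternatively, and more simply, shrink $\varepsilon$ so that $\varepsilon<\|x_\beta(y)\|$ is not needed — instead note that if $\|x_\beta(y)-y\|\ge\varepsilon$ then the nearest point of $\eta_\beta(y')$ to $x_\beta(y)$ has nonnegative parameter, and one can always arrange $\|x_\beta(y)-y\|\ge\varepsilon$ by moving out along $\eta_\beta(y)$ within the ball $U_{\alpha_j}$, enlarging $t$, since the ball has positive radius. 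I would phrase this as: replace $x_\beta(y)$ by the point of $\overline{U_{\alpha_j}}$ maximizing the distance to $y$, which still has $U(\cdot,d)\subset E$.

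So the steps, in order, are: (1) fix $y\in E^{\diamondsuit}$ and apply Lemma~\ref{lemm1} to get the points $x_\alpha(y)$ and the uniform radius $d(y)$; (2) set $\varepsilon:=d(y)>0$; (3) for arbitrary $y'\in U(y,\varepsilon)$ and arbitrary $\beta\in S^{n-1}$, consider the translated ray $\eta_\beta(y')=\eta_\beta(y)+(y'-y)$ and the point $p:=x_\beta(y)+(y'-y)\in\eta_\beta(y')$ (checking the parameter is nonnegative, after the adjustment above); (4) estimate $\|p-x_\beta(y)\|=\|y'-y\|<\varepsilon=d(y)$, so $p\in U(x_\beta(y),d(y))\subset E$; (5) conclude $\eta_\beta(y')\cap E\ne\varnothing$ for every $\beta$, hence $y'\in E^{\diamondsuit}$; (6) therefore $U(y,\varepsilon)\subset E^{\diamondsuit}$ and $E^{\diamondsuit}$ is open.

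The main obstacle, as indicated, is the bookkeeping in step (3): ensuring that the perturbed ray hits the \emph{same} ball rather than just the line through $x_\beta(y)$, i.e., controlling the sign of the ray parameter uniformly in $\beta$. This is where one leans on the finiteness of the subcover $\{\sigma(U_{\alpha_j})\}_{j\in M}$ from the proof of Lemma~\ref{lemm1} and on the freedom to slide $x_\beta(y)$ outward inside its ball. Everything else is a routine triangle-inequality estimate and the observation that weak $1$-semiconvexity of $E$ is not even needed for this direction — only openness of $E$ and non-emptiness of $E^{\diamondsuit}$ — so the hypothesis $E\in\mathbf{WS^n_1}\setminus\mathbf{S^n_1}$ serves merely to guarantee $E^{\diamondsuit}\ne\varnothing$.
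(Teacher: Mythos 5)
There is a genuine gap, and it is not where you think it is. The issue you spend most of your effort on --- the sign of the ray parameter in step (3) --- is a non-issue: writing $x_\beta(y)=t_\beta\beta+y$ with $t_\beta\geqslant 0$, the translated point $p=x_\beta(y)+(y'-y)=t_\beta\beta+y'$ lies on $\eta_\beta(y')$ with the \emph{same} nonnegative parameter $t_\beta$, so no sliding of $x_\beta(y)$ outward inside its ball and no appeal to the internals of Lemma~\ref{lemm1} are needed; this part of your argument coincides with the paper's. The real gap is that, by Definition~\ref{def5}, a $1$-nonsemiconvexity point must belong to $\mathbb{R}^n\setminus E$. You prove that every ray emanating from every $y'\in U(y,d(y))$ meets $E$, but you never check that $y'\notin E$, and with $\varepsilon:=d(y)$ this can fail: nothing prevents the ball $U(y,d(y))$ from containing points of $E$, and those points are excluded from $E^{\diamondsuit}$ by definition, so the claimed inclusion $U(y,\varepsilon)\subset E^{\diamondsuit}$ is unjustified.

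This is precisely why your closing remark --- that weak $1$-semiconvexity of $E$ is ``not even needed'' and serves only to guarantee $E^{\diamondsuit}\ne\varnothing$ --- is wrong. Take $E=U(O,1)\setminus\{O\}\subset\mathbb{R}^n$, $n\geqslant 2$: it is open, $E^{\diamondsuit}=\{O\}\ne\varnothing$, yet $E^{\diamondsuit}$ is not open, because every neighborhood of $O$ meets $E$. The hypothesis $E\in\mathbf{WS^n_1}$ is exactly what rules this out: if $y\in E^{\diamondsuit}$ then $y\notin\partial E$ (a boundary point of a weakly $1$-semiconvex set admits a ray missing $E$, contradicting $y\in E^{\diamondsuit}$), hence there is $\varepsilon_1>0$ with $U(y,\varepsilon_1)\subset\mathbb{R}^n\setminus\overline{E}$, and the correct radius is $\varepsilon:=\min\{\varepsilon_1,d(y)\}$, as in the paper's proof. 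With that repair (and dropping the unnecessary parameter bookkeeping), your argument becomes the paper's argument.
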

\begin{proof} Fix an arbitrary point $y\in E^{\diamondsuit}$ and show that it is an inner point of $E^{\diamondsuit}$.

Since $E$ is weakly $1$-semiconvex, it follows that $y\not\in \partial E$. Then there exists a number $\varepsilon_1>0$ such that $U(y,\varepsilon_1)\subset (\mathbb{R}^n\setminus \overline{E}$).

 By Lemma~\ref{lemm1}, for the fixed $y$ there exist points $x_\alpha\in \eta_\alpha(y)\cap E$, $\alpha\in S^{n-1}$, and a constant $d>0$ such that $U(x_\alpha,d)\subset E$.

 Let $\varepsilon:=\min\{\varepsilon_1,d\}$. Consider the neighborhood $U(y, \varepsilon)$ of the point $y$. Let $z\in U(y, \varepsilon)$ and let $\eta_\alpha(z)$, $\alpha\in S^{n-1}$, be an arbitrary ray with initial point at $z$. Draw the ray $\eta_\alpha(y)$ parallel to the ray $\eta_\alpha(z)$. Since $U(x_\alpha, \varepsilon)\subseteq U(x_\alpha, d)\subset E$ for the point $x_\alpha$ correspondent to $\eta_\alpha(y)$, it follows that $\eta_\alpha(z)\cap U(x_\alpha, \varepsilon)\ne\varnothing$ and, therefore, $\eta_\alpha(z)\cap E\ne\varnothing$ for any $\alpha\in S^{n-1}$; see Figure \ref{Fig3} a). Thus, $z$ is a $1$-nonsemiconvexity point of $E$. Since $z$ is arbitrary, it implies that all points of $U(y, \varepsilon)$ are $1$-nonsemiconvexity points of $E$. Hence, $y$ is an inner point of $E^{\diamondsuit}$.
 \end{proof}
\begin{figure}[h]
	\centering
   \includegraphics[width=11 cm]{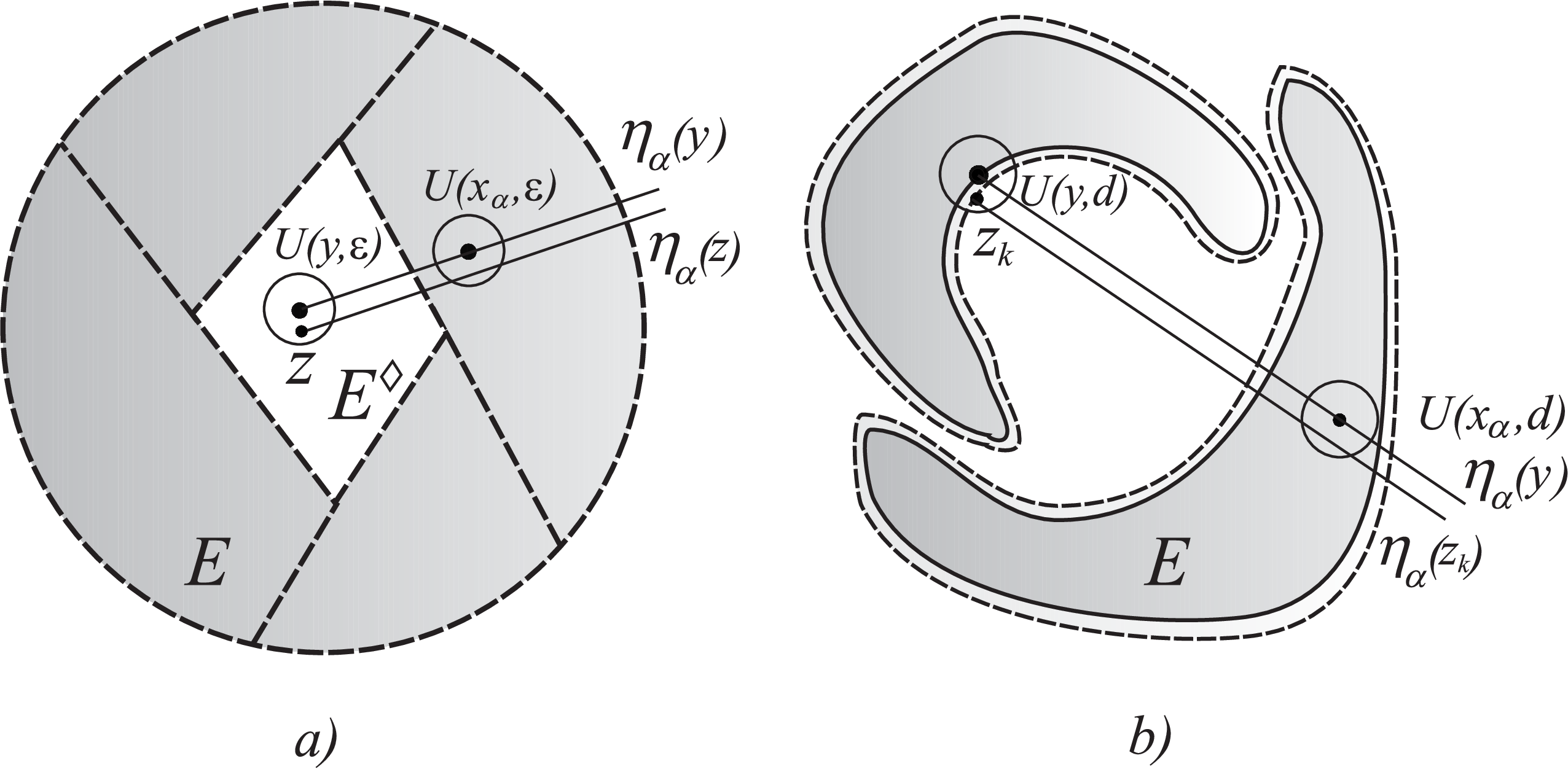}
	\caption{}\label{Fig3}
\end{figure}
 \begin{corollary} \label{corol1}
Suppose that an open subset $E\subset \mathbb{R}^n$ belongs to the class $\mathbf{WS^n_1}\setminus \mathbf{S^n_1}$. Then $E^{\diamondsuit}$ is weakly $1$-semiconvex.
\end{corollary}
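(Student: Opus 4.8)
The plan is to lean on Theorem~\ref{theor6}: since $E^{\diamondsuit}$ is already known to be open, it suffices to check the defining property of an \emph{open} weakly $1$-semiconvex set from Definition~\ref{def3}, namely that through every point $x\in\partial E^{\diamondsuit}$ there passes a closed ray disjoint from $E^{\diamondsuit}$ (if $\partial E^{\diamondsuit}=\varnothing$ there is nothing to prove).

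First I would pin down where a boundary point $x\in\partial E^{\diamondsuit}$ can live. Because $E^{\diamondsuit}$ is open, $x\notin E^{\diamondsuit}$, and because $E^{\diamondsuit}\subseteq\mathbb{R}^n\setminus E$ with $E$ open, a point of $E$ would have an entire neighborhood missing $E^{\diamondsuit}$, contradicting $x\in\overline{E^{\diamondsuit}}$. Hence $x\in\mathbb{R}^n\setminus(E\cup E^{\diamondsuit})$, so by Definition~\ref{def5} the point $x$ fails to be a $1$-nonsemiconvexity point of $E$; that is, there is a direction $\alpha_0\in S^{n-1}$ with $\eta_{\alpha_0}(x)\cap E=\varnothing$.

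The heart of the argument is that this very ray also avoids $E^{\diamondsuit}$. Suppose, to the contrary, that some $w\in\eta_{\alpha_0}(x)\cap E^{\diamondsuit}$; then $w\neq x$ (as $x\notin E^{\diamondsuit}$), so $w=t_0\alpha_0+x$ for some $t_0>0$ and the subray $\eta_{\alpha_0}(w)$ is contained in $\eta_{\alpha_0}(x)$. But $w\in E^{\diamondsuit}$ forces every ray from $w$, in particular $\eta_{\alpha_0}(w)$, to meet $E$, contradicting $\eta_{\alpha_0}(x)\cap E=\varnothing$. Thus $\eta_{\alpha_0}(x)$ is a closed ray (a $1$-dimensional half-plane) through $x$ with $\eta_{\alpha_0}(x)\cap E^{\diamondsuit}=\varnothing$, which is exactly the condition in Definition~\ref{def3}. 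Since $x\in\partial E^{\diamondsuit}$ was arbitrary and $E^{\diamondsuit}$ is open, $E^{\diamondsuit}$ is weakly $1$-semiconvex.

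I do not expect a genuine obstacle here: the whole content is the elementary ``downward closedness along rays'' observation that if a ray emanating from a point avoids $E$, then no point lying on that ray can be a $1$-nonsemiconvexity point of $E$. The only care needed is the bookkeeping that $x\in\partial E^{\diamondsuit}$ cannot belong to $E$ and cannot belong to $E^{\diamondsuit}$, so that Definition~\ref{def5} can be applied at $x$, together with the invocation of Theorem~\ref{theor6} to license the use of the open-set version of Definition~\ref{def3}.
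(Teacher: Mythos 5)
Your proposal is correct and follows essentially the same route as the paper's own proof: use Theorem~\ref{theor6} to know $E^{\diamondsuit}$ is open, note that a boundary point of $E^{\diamondsuit}$ lies outside $E$ and is not a $1$-nonsemiconvexity point, take a ray from it missing $E$, and conclude via Definition~\ref{def5} that this ray also misses $E^{\diamondsuit}$. Your write-up merely spells out the details (that $x\notin E\cup E^{\diamondsuit}$, and the subray argument) that the paper leaves implicit.
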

 \begin{proof}[Proof.] Since  $E^{\diamondsuit}$ is open,  then for any point $y\in\partial E^{\diamondsuit}$, there exists a ray $\eta_{\alpha'}(y)$, $\alpha^{\prime}\in S^{n-1}$, not intersecting $E$. Then $\eta_{\alpha'}(y)\cap E^{\diamondsuit}=\varnothing$ by  Definition~\ref{def5}. Thus, $E^{\diamondsuit}$ is weakly $1$-semiconvex.
\end{proof}


\begin{theorem}\label{theor6_2}
Suppose that an open subset $E\subset \mathbb{R}^n$ belongs to the class $\mathbf{WC^n_1}\setminus \mathbf{C^n_1}$. Then $E^{\triangle}$ is open.
  \end{theorem}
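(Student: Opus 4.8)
The plan is to mirror exactly the argument used for Theorem~\ref{theor6}, replacing rays by straight lines and the $1$-nonsemiconvexity condition by the $1$-nonconvexity condition. First I would fix an arbitrary point $y\in E^{\triangle}$ and aim to show it is an interior point of $E^{\triangle}$. Since $E$ is weakly $1$-convex, through every boundary point of $E$ there passes a straight line missing $E$; as every line through $y$ meets $E$, the point $y$ cannot lie on $\partial E$, and since $y\notin E$ either (it is in the complement by Definition~\ref{def5}), we get $y\in\mathbb{R}^n\setminus\overline{E}$, so there is $\varepsilon_1>0$ with $U(y,\varepsilon_1)\subset\mathbb{R}^n\setminus\overline{E}$.

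Next I would invoke Lemma~\ref{lemm1} in its ``$E^{\triangle}$'' form: for the fixed $y$ and for every line $\gamma_\alpha(y)$, $\alpha\in S^{n-1}$, there is a point $x_\alpha(y)\in\gamma_\alpha(y)\cap E$ and a single constant $d=d(y)>0$ with $U(x_\alpha(y),d)\subset E$. Set $\varepsilon:=\min\{\varepsilon_1,d\}$ and take any $z\in U(y,\varepsilon)$. For an arbitrary direction $\alpha\in S^{n-1}$, draw the line $\gamma_\alpha(z)$ through $z$ and the parallel line $\gamma_\alpha(y)$ through $y$; these two parallel lines are at distance $\|z-y\|<\varepsilon\le d$. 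The ball $U(x_\alpha(y),\varepsilon)\subseteq U(x_\alpha(y),d)\subset E$ is centered on $\gamma_\alpha(y)$, so the parallel line $\gamma_\alpha(z)$, being within distance $\varepsilon$ of $\gamma_\alpha(y)$, must pass through this ball; hence $\gamma_\alpha(z)\cap E\neq\varnothing$. Since $\alpha$ was arbitrary, every line through $z$ meets $E$, so $z\in E^{\triangle}$; since $z$ was arbitrary in $U(y,\varepsilon)$, we conclude $U(y,\varepsilon)\subset E^{\triangle}$, i.e., $y$ is interior. (Here one should note the harmless subtlety that a line through a ball of radius $\varepsilon$ whose center is on a parallel line at distance exactly $<\varepsilon$ does intersect the ball; strict inequality $\|z-y\|<\varepsilon$ is what guarantees this, just as in the ray case of Figure~\ref{Fig3}.)

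The only real content beyond Theorem~\ref{theor6} is the applicability of Lemma~\ref{lemm1} to the straight-line case, which is already built into its statement and proof (the central projection $\sigma$ then lands in $\mathbb{R}\mathbf{P}^{n-1}$, which is again compact, so the Heine--Borel extraction of a finite subcover still works); consequently there is no genuinely new obstacle here. If anything, the step to watch is verifying that $y\notin\overline{E}$: one must use that $E\in\mathbf{WC^n_1}\setminus\mathbf{C^n_1}$ means $E$ is itself weakly $1$-convex, so no boundary point of $E$ can be a $1$-nonconvexity point, together with $E^{\triangle}\subseteq\mathbb{R}^n\setminus E$ by definition; these two facts together force $E^{\triangle}\subseteq\mathbb{R}^n\setminus\overline{E}$, which is exactly what lets us find the neighborhood $U(y,\varepsilon_1)$ disjoint from $E$.
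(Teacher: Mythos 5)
Your proof is correct and follows exactly the route the paper takes: the paper's own proof of this theorem simply says to repeat the argument of Theorem~\ref{theor6} with the straight-line case of Lemma~\ref{lemm1} (where compactness of $\mathbb{R}\mathbf{P}^{n-1}$ replaces that of $S^{n-1}$) and with lines in place of rays, which is precisely what you carry out, only in more detail.
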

  \begin{proof}[Proof.]
The scheme of proving this theorem is exactly the same as for Theorem~\ref{theor6}. We fix an arbitrary point of $E^{\triangle}$ and show that there exists a neighborhood of this point which belongs to $E^{\triangle}$. To do so, we use Lemma~\ref{lemm1} with respect to the points $y\in E^{\triangle}$ and the straight lines $\gamma_\alpha(y)$, $\alpha\in S^{n-1}$, and  we also replace the rays with the straight lines everywhere in the proof of Theorem~\ref{theor6}.
\end{proof}
\begin{corollary} \label{corol1_1}
Suppose that an open subset $E\subset \mathbb{R}^n$ belongs to the class $\mathbf{WC^n_1}\setminus \mathbf{C^n_1}$. Then $E^{\triangle}$ is weakly $1$-convex.
\end{corollary}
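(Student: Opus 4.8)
The plan is to imitate the proof of Corollary~\ref{corol1} line by line, replacing rays by straight lines and the semiconvexity notions by the convexity ones, and to lean on Theorem~\ref{theor6_2}, which already furnishes the openness of $E^{\triangle}$.

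First I would record the elementary inclusion $\partial E^{\triangle}\subset\mathbb{R}^n\setminus E$. Indeed, by Definition~\ref{def5} we have $E^{\triangle}\subset\mathbb{R}^n\setminus E$; since $E$ is open, every point of $E$ possesses a neighbourhood disjoint from $E^{\triangle}$, so no point of $E$ can lie in $\overline{E^{\triangle}}$, and in particular a boundary point $y$ of $E^{\triangle}$ satisfies $y\notin E$. By Theorem~\ref{theor6_2} the set $E^{\triangle}$ is open, so $y\notin E^{\triangle}$ as well. Note that $E^{\triangle}\ne\varnothing$ by hypothesis, so $\partial E^{\triangle}$ may indeed be non-empty and the argument is not vacuous.

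Next, fix an arbitrary $y\in\partial E^{\triangle}$. Since $y\in\mathbb{R}^n\setminus E$ but $y\notin E^{\triangle}$, the point $y$ is not a $1$-nonconvexity point of $E$; hence there is a direction $\alpha'\in S^{n-1}$ such that the straight line $\gamma_{\alpha'}(y)$ does not meet $E$. Now for every point $z\in\gamma_{\alpha'}(y)$ this same line passes through $z$ and avoids $E$, so by Definition~\ref{def5} the point $z$ is not a $1$-nonconvexity point of $E$, i.e.\ $z\notin E^{\triangle}$. Therefore $\gamma_{\alpha'}(y)\cap E^{\triangle}=\varnothing$. Since $y\in\partial E^{\triangle}$ was arbitrary and $E^{\triangle}$ is open, Definition~\ref{def3} shows that $E^{\triangle}$ is weakly $1$-convex.

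I expect no real obstacle here beyond Theorem~\ref{theor6_2}: the openness of $E^{\triangle}$ is precisely what licenses the use of Definition~\ref{def3}, and the remainder is a one-line chase through Definition~\ref{def5}. The only point deserving a moment's care is the inclusion $\partial E^{\triangle}\subset\mathbb{R}^n\setminus E$, needed so that a boundary point is a genuine candidate for being a $1$-nonconvexity point; that is why I would isolate it as the first step.
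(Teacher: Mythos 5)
Your proof is correct and is essentially the paper's intended argument: the paper leaves Corollary~\ref{corol1_1} without a written proof, the understood justification being exactly the line-for-ray analogue of the proof of Corollary~\ref{corol1} via Theorem~\ref{theor6_2}, which is what you carry out. Your added observation that $\partial E^{\triangle}\subset\mathbb{R}^n\setminus E$ (so a boundary point is a legitimate candidate for a $1$-nonconvexity point) is a worthwhile explicit step that the paper only uses implicitly.
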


 \begin{theorem}\label{theor7}
Let $E\subset \mathbb{R}^n$ be a closed subset such that $\mathrm{Int}\, E\ne\varnothing$. If $E$ is weakly $1$-semiconvex, then $\mathrm{Int}\, E$ is weakly $1$-semiconvex.
\end{theorem}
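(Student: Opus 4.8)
The plan is to check Definition~\ref{def3} directly for the open set $\mathrm{Int}\,E$: given an arbitrary $x\in\partial(\mathrm{Int}\,E)$, I will produce a direction $\alpha\in S^{n-1}$ with $\eta_\alpha(x)\cap\mathrm{Int}\,E=\varnothing$. Fix such an $x$. Since $E$ is closed we have $\overline{\mathrm{Int}\,E}\subseteq E$, so $x\in E$, while $x\notin\mathrm{Int}\,E$ because $\mathrm{Int}\,E$ is open; hence $x\in\partial E$ (and in particular $E\ne\mathbb{R}^n$). Let $E_k$, $k=1,2,\dots$, be open weakly $1$-semiconvex sets approximating $E$ from the outside, so that $\overline{E_{k+1}}\subset E_k$ — in particular the family is nested, $E_{k+1}\subseteq E_k$ — and $E=\bigcap_k E_k$. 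Each $E_k\supseteq E\supseteq\mathrm{Int}\,E\ne\varnothing$ is nonempty, and it is a proper subset of $\mathbb{R}^n$ for all large $k$ (otherwise $E=\bigcap_kE_k=\mathbb{R}^n$, contrary to $x\in\partial E$), so $\partial E_k\ne\varnothing$ for all large $k$.

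The first key step is to show $r_k:=\mathrm{dist}(x,\partial E_k)\to0$. Suppose instead that $r_k\geqslant\delta>0$ for some $\delta$ and infinitely many $k$ (with $\partial E_k\ne\varnothing$). For each such $k$ the ball $U(x,\delta)$ is connected, meets the open set $E_k$ (at $x$), and misses $\partial E_k$, whence $U(x,\delta)\subset E_k$. As the family is nested this forces $U(x,\delta)\subset E_k$ for \emph{every} $k$, so $U(x,\delta)\subset\bigcap_kE_k=E$ and $x\in\mathrm{Int}\,E$, a contradiction. Therefore $r_k\to0$, and I may choose points $p_k\in\partial E_k$ with $p_k\to x$. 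Since each $E_k$ is weakly $1$-semiconvex, for every $k$ there is $\alpha_k\in S^{n-1}$ with $\eta_{\alpha_k}(p_k)\cap E_k=\varnothing$; because $\mathrm{Int}\,E\subseteq E\subseteq E_k$, also $\eta_{\alpha_k}(p_k)\cap\mathrm{Int}\,E=\varnothing$.

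Next comes a direction-compactness limit. By compactness of $S^{n-1}$, pass to a subsequence along which $\alpha_{k_l}\to\alpha\in S^{n-1}$, and put $\eta:=\eta_\alpha(x)$. I claim $\eta\cap\mathrm{Int}\,E=\varnothing$, which completes the proof since $x\in\eta$. Suppose not: then $q=t_0\alpha+x\in\mathrm{Int}\,E$ for some $t_0>0$ (the value $t_0=0$ gives $q=x\notin\mathrm{Int}\,E$), so there is $\rho>0$ with $U(q,\rho)\subset E$. The points $q_l:=t_0\alpha_{k_l}+p_{k_l}\in\eta_{\alpha_{k_l}}(p_{k_l})$ satisfy $\|q_l-q\|\leqslant t_0\|\alpha_{k_l}-\alpha\|+\|p_{k_l}-x\|\to0$, hence $q_l\in U(q,\rho)\subset E\subseteq E_{k_l}$ for all large $l$, contradicting $\eta_{\alpha_{k_l}}(p_{k_l})\cap E_{k_l}=\varnothing$. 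Thus $\eta$ is a ray through $x$ missing $\mathrm{Int}\,E$, and $\mathrm{Int}\,E$ is weakly $1$-semiconvex.

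The only place the outside-approximation hypothesis is essentially used is the convergence $\mathrm{dist}(x,\partial E_k)\to0$ of the second paragraph, and this is the main point to get right; once the approximating boundary points $p_k$ and their escaping rays are available, the rest is the routine limit above. It is also worth noting that this argument yields only $\eta\cap\mathrm{Int}\,E=\varnothing$, not $\eta\cap E=\varnothing$, since the limiting ray may run along $\partial E$; this is precisely why the conclusion is stated for $\mathrm{Int}\,E$ and not for $E$ itself, and it is the same phenomenon as in the planar case of Lemma~\ref{lemm6}.
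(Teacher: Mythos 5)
Your proof is correct, but it follows a genuinely different route from the paper's. The paper argues by contradiction: it assumes $\mathrm{Int}\,E$ has a $1$-nonsemiconvexity point $y\in\partial E$, invokes Lemma~\ref{lemm1} to get points $x_\alpha\in\eta_\alpha(y)\cap\mathrm{Int}\,E$ and a uniform radius $d>0$ with $U(x_\alpha,d)\subset\mathrm{Int}\,E$ for every direction $\alpha$, and then shows that boundary points $z_k\in\partial G_k\cap U(y,d)$ of the approximating sets would themselves be $1$-nonsemiconvexity points of $G_k$ (every ray from $z_k$ meets some ball $U(x_\alpha,d)\subset E\subset G_k$), contradicting weak $1$-semiconvexity of $G_k$. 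You instead verify Definition~\ref{def3} directly at each $x\in\partial(\mathrm{Int}\,E)$: you prove $\mathrm{dist}(x,\partial E_k)\to 0$ via a connectedness argument (a step the paper only asserts when it claims $\partial G_k\cap U(y,d)\ne\varnothing$ for large $k$, so your careful treatment is a welcome addition), take escaping rays $\eta_{\alpha_k}(p_k)$ at nearby boundary points of the $E_k$, and pass to a limit direction by compactness of $S^{n-1}$, obtaining a ray through $x$ that misses $\mathrm{Int}\,E$. Your approach is more elementary and self-contained — it bypasses Lemma~\ref{lemm1} and its Heine--Borel covering of the sphere entirely, and it constructs the escaping ray explicitly rather than deriving a contradiction — while the paper's proof buys uniformity of method across Theorems~\ref{theor6}, \ref{theor6_2}, \ref{theor7}, \ref{theor7_1} by recycling the same lemma, and incidentally exhibits the stronger transfer of nonsemiconvexity points to the approximating sets. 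Your closing remark that the limit ray need only avoid $\mathrm{Int}\,E$ (not $E$), which is exactly why the statement concerns $\mathrm{Int}\,E$, is also accurate.
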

\begin{proof}
Suppose that  $\mathrm{Int}\, E$ is not weakly $1$-semiconvex. Then there exists a $1$-nonsemiconvexity point $y\in\partial E$ of the set $\mathrm{Int}\, E$.

By Lemma~\ref{lemm1}, for the point $y$, there exist points $x_\alpha\in \eta_\alpha(y)\cap \mathrm{Int}\,E$, $\alpha\in S^{n-1}$, and a constant $d>0$ such that $U(x_\alpha,d)\subset \mathrm{Int}\,E$.

Consider the neighborhood $U(y, d)$ of the point $y$; see Figure \ref{Fig3}~b).
Since $E$ is weakly $1$-semiconvex, there exists a family of open weakly $1$-semiconvex sets $G_k$, $k=1,2,\ldots$, approximating $E$ from the outside. Then there exists an index $k_0$ such that $\partial G_k\cap U(y, d)\ne\varnothing$ for all $k\geqslant k_0$.  For each $k\geqslant k_0$, choose a point $z_k\in\partial G_k\cap U(y, d)$ and draw an arbitrary ray $\eta_\alpha(z_k)$, $\alpha\in S^{n-1}$,  with initial point at $z_k$. Consider the ray $\eta_\alpha(y)$ parallel to $\eta_\alpha(z_k)$.  Since $U(x_\alpha,d)\subset \mathrm{Int}\,E\subset E$ for the point $x_\alpha$ correspondent to $\eta_\alpha(y)$, it follows that $\eta_\alpha(z_k)\cap U(x_\alpha,d)\ne\varnothing$ and, therefore, $\eta_\alpha(z_k)\cap E\ne\varnothing$. Since $G_k\supset E$, $k=1,2,\ldots$, then $\eta_\alpha(z_k)\cap G_k\ne\varnothing$, $k\geqslant k_0$.

 Since the ray $\eta_\alpha(z_k)$ is arbitrary, the point $z_k\in\partial G_k$ is a $1$-nonsemiconvexity  point of $G_k$ for all $k\geqslant k_0$, which gives a contradiction.
 \end{proof}

 \begin{theorem}\label{theor7_1}
Let $E\subset \mathbb{R}^n$ be a closed subset such that $\mathrm{Int}\, E\ne\varnothing$. If $E$ is weakly $1$-convex, then $\mathrm{Int}\, E$ is weakly $1$-convex.
\end{theorem}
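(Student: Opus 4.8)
The plan is to mirror the argument of Theorem~\ref{theor7} verbatim, replacing rays with straight lines throughout. Assume for contradiction that $\mathrm{Int}\,E$ is not weakly $1$-convex; then there is a $1$-nonconvexity point $y$ of $\mathrm{Int}\,E$ lying on $\partial E$ (a point of $\mathrm{Int}\,E$ could not serve, since through an interior point of an open set one cannot pass a line missing that set, so the offending boundary point of the approximating failure must lie in $\partial(\mathrm{Int}\,E)\subset\partial E$). Apply Lemma~\ref{lemm1} in its $E^{\triangle}$ form to the open set $\mathrm{Int}\,E$ and the point $y$: for every $\alpha\in S^{n-1}$ there is a point $x_\alpha\in\gamma_\alpha(y)\cap\mathrm{Int}\,E$ with $U(x_\alpha,d)\subset\mathrm{Int}\,E$, where $d>0$ depends only on $y$.

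Next I would use the outside approximation. Since $E$ is closed and weakly $1$-convex, fix a family of open weakly $1$-convex sets $G_k\supset E$ with $\overline{G}_{k+1}\subset G_k$ and $\bigcap_k G_k=E$. Because $y\in\partial E$ and the $G_k$ shrink to $E$, for all large $k$ the boundary $\partial G_k$ meets $U(y,d)$; pick $z_k\in\partial G_k\cap U(y,d)$. For an arbitrary line $\gamma_\alpha(z_k)$ through $z_k$, translate it to the parallel line $\gamma_\alpha(y)$ through $y$; the translation has length $\|z_k-y\|<d$, so $\gamma_\alpha(z_k)$ passes within distance $<d$ of $x_\alpha$, hence $\gamma_\alpha(z_k)\cap U(x_\alpha,d)\neq\varnothing$, and therefore $\gamma_\alpha(z_k)\cap\mathrm{Int}\,E\neq\varnothing$, so $\gamma_\alpha(z_k)\cap E\neq\varnothing$, so $\gamma_\alpha(z_k)\cap G_k\neq\varnothing$. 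Since $\alpha$ was arbitrary, every line through $z_k\in\partial G_k$ meets $G_k$, contradicting weak $1$-convexity of $G_k$.

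The only real point needing care — and the step I would flag as the main obstacle — is justifying that the $1$-nonconvexity point witnessing the failure of weak $1$-convexity of $\mathrm{Int}\,E$ can be taken on $\partial E$, and more precisely that $U(y,d)$ genuinely intersects $\partial G_k$ for all sufficiently large $k$. For the first part, note $y\in\partial(\mathrm{Int}\,E)$, and any boundary point of $\mathrm{Int}\,E$ that is simultaneously an interior point of $E$ would already be interior to $\mathrm{Int}\,E$, a contradiction, so $y\in\partial E$; alternatively one simply observes that a $1$-nonconvexity point of an open set lies in the complement of that open set, hence here in $\partial E\cup(\mathbb{R}^n\setminus E)$, and the complement-of-$E$ case is handled by the same argument using that $y\notin E$ forces $\partial G_k$ near $y$ for large $k$ too. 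For the intersection claim: since $\bigcap_k G_k=E$ and $y\notin\mathrm{Int}\,E$ while $y\in E\subset G_k$, if some $G_k$ contained $U(y,d)$ entirely then so would all later $G_{k'}$ and hence $E\supset\bigcap G_{k'}$ would contain $U(y,d)$, contradicting $y\in\partial E$; thus $U(y,d)\cap(\mathbb{R}^n\setminus G_k)\neq\varnothing$ for all large $k$, and since $y\in G_k$, connectedness of the segment from $y$ forces $U(y,d)\cap\partial G_k\neq\varnothing$. Everything else is the routine parallel-translation estimate already used in the proof of Theorem~\ref{theor7}.
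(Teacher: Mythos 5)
Your proposal is correct and takes essentially the same route as the paper, which proves this theorem exactly by rerunning the argument of Theorem~\ref{theor7} with straight lines $\gamma_\alpha$ in place of rays $\eta_\alpha$; your added justifications (that the offending point lies in $\partial(\mathrm{Int}\,E)\subset\partial E$ because $E$ is closed, and that $\partial G_k$ must meet $U(y,d)$) fill in steps the paper asserts without comment. One small slip in the latter justification: containment $U(y,d)\subset G_k$ would pass to \emph{earlier} (larger) sets, not later ones; the correct argument is that if \emph{every} $G_k$ contained $U(y,d)$ then $E=\bigcap_k G_k\supset U(y,d)$, contradicting $y\in\partial E$, so some $G_{k_0}$ fails to contain $U(y,d)$, and by nestedness so does every $G_k$ with $k\geqslant k_0$; combined with $y\in E\subset G_k$ and connectedness of $U(y,d)$ this yields $\partial G_k\cap U(y,d)\ne\varnothing$ as you claim.
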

\begin{proof}
The proof of this theorem is the same as the proof of Theorem~\ref{theor7}. We only consider weakly $1$-convex sets instead of  weakly $1$-semiconvex and replace the rays with the straight lines everywhere in the proof of Theorem~\ref{theor7}.
\end{proof}

 \begin{proposition}\label{theor8}
Suppose that an open subset $E\subset \mathbb{R}^n$ belongs to the class $\mathbf{WS^n_1}\setminus \mathbf{S^n_1}$. Then there exists a collection of rays $\left\{\eta(x)\right\}_{x\in \partial E^{\diamondsuit}}$ such that
 \begin{enumerate}
     \item[$\bullet$] the set $\bigcup\limits_{x\in \partial E^{\diamondsuit}} \eta(x)\bigcup E^{\diamondsuit}$ does not contain rays emanating from $E^{\diamondsuit}$,
\item[$\bullet$] $\bigcup\limits_{x\in \partial E^{\diamondsuit}} \eta(x)\bigcap E=\varnothing.$
 \end{enumerate}
  \end{proposition}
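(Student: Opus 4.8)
The plan is to build the rays directly from two facts already available: $E^{\diamondsuit}$ is open (Theorem~\ref{theor6}), and, by Definition~\ref{def5}, every point of $E^{\diamondsuit}$ is a $1$-nonsemiconvexity point of $E$, whereas every point of $\mathbb{R}^n\setminus E$ that is \emph{not} such a point is the initial point of some ray missing $E$. I read the statement with the implicit convention that each $\eta(x)$ emanates from $x$ (so that $\bigcup_{x}\eta(x)\supset\partial E^{\diamondsuit}$, exactly as in Lemma~\ref{lemm4}(d)); otherwise the empty family would satisfy both bullets trivially.

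First I would record the auxiliary fact $\overline{E^{\diamondsuit}}\cap E=\varnothing$. Indeed $E^{\diamondsuit}\subset\mathbb{R}^n\setminus E$ by Definition~\ref{def5}, and if some $x\in\overline{E^{\diamondsuit}}$ lay in the open set $E$, then a neighbourhood of $x$ contained in $E$ would also meet $E^{\diamondsuit}$, which is impossible. In particular $\partial E^{\diamondsuit}\cap E=\varnothing$. Since $E^{\diamondsuit}$ is open by Theorem~\ref{theor6}, we also have $\partial E^{\diamondsuit}\cap E^{\diamondsuit}=\varnothing$. Hence for each $x\in\partial E^{\diamondsuit}$ we have $x\in\mathbb{R}^n\setminus E$ and $x\notin E^{\diamondsuit}$, so $x$ is not a $1$-nonsemiconvexity point of $E$; by Definition~\ref{def5} there is a ray $\eta(x)$ emanating from $x$ with $\eta(x)\cap E=\varnothing$. (Note that Corollary~\ref{corol1} alone would only give a ray missing $E^{\diamondsuit}$, which is too weak here; one must go back to $E$.) Choosing one such $\eta(x)$ for every $x\in\partial E^{\diamondsuit}$, the second bullet $\bigl(\bigcup_{x\in\partial E^{\diamondsuit}}\eta(x)\bigr)\cap E=\varnothing$ is immediate.

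For the first bullet I would argue by contradiction. Suppose $\rho$ is a ray emanating from a point $p\in E^{\diamondsuit}$ with $\rho\subset\bigl(\bigcup_{x\in\partial E^{\diamondsuit}}\eta(x)\bigr)\cup E^{\diamondsuit}$. Intersecting with $E$ and using $\bigcup_{x}\eta(x)\cap E=\varnothing$ together with $E^{\diamondsuit}\cap E=\varnothing$ yields $\rho\cap E=\varnothing$. But $p\in E^{\diamondsuit}$ means every ray emanating from $p$, in particular $\rho$, intersects $E$; this is the desired contradiction. Hence $\bigl(\bigcup_{x\in\partial E^{\diamondsuit}}\eta(x)\bigr)\cup E^{\diamondsuit}$ contains no ray emanating from $E^{\diamondsuit}$.

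There is no serious obstacle: the argument reduces to the remark that any point of $\mathbb{R}^n\setminus E$ is either a $1$-nonsemiconvexity point of $E$ or the initial point of some ray missing $E$, and on $\partial E^{\diamondsuit}$ the first alternative is excluded because $E^{\diamondsuit}$ is open (Theorem~\ref{theor6}). The only step that needs a line of care is $\partial E^{\diamondsuit}\cap E=\varnothing$, which is where openness of $E$ enters. The same proof, with straight lines in place of rays, gives the analogous statement for $E\in\mathbf{WC^n_1}\setminus\mathbf{C^n_1}$.
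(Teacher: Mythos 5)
Your proof is correct and follows essentially the same route as the paper: openness of $E^{\diamondsuit}$ (Theorem~\ref{theor6}) guarantees that each $x\in\partial E^{\diamondsuit}$ is not a $1$-nonsemiconvexity point of $E$, hence admits a ray $\eta(x)$ with $\eta(x)\cap E=\varnothing$, and the first bullet follows by the same contradiction with the definition of a $1$-nonsemiconvexity point. Your explicit check that $\partial E^{\diamondsuit}\cap E=\varnothing$ is a detail the paper leaves implicit, and it is a welcome addition.
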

  \begin{proof}
  Since $E^{\diamondsuit}$ is open, for any point $x\in \partial E^{\diamondsuit}$, there exists a ray $\eta(x)$ such that $\eta(x)\cap E=\varnothing$. Moreover, $\bigcup\limits_{x\in \partial E^{\diamondsuit}} \eta(x)\cup E^{\diamondsuit}$ does not contain any ray emanating from $E^{\diamondsuit}$, otherwise, a ray $\eta(y)\subset \bigcup\limits_{x\in \partial E^{\diamondsuit}} \eta(x)\cup E^{\diamondsuit}$, $y\in  E^{\diamondsuit}$, does not intersect $E$, which contradicts the definition of $1$-nonsemiconvexity point.
  \end{proof}
  \begin{proposition}\label{theor9}
Suppose that an open subset $E\subset \mathbb{R}^n$ belongs to the class $\mathbf{WC^n_1}\setminus \mathbf{C^n_1}$. Then there exists a collection of straight lines $\left\{\gamma(x)\right\}_{x\in \partial E^{\triangle}}$ such that
 \begin{enumerate}
     \item[$\bullet$] the set $\bigcup\limits_{x\in \partial E^{\triangle}} \gamma(x)\bigcup E^{\triangle}$ does not contain straight lines passing through $E^{\triangle}$,
\item[$\bullet$] $\bigcup\limits_{x\in \partial E^{\triangle}} \gamma(x)\bigcap E=\varnothing.$
 \end{enumerate}
  \end{proposition}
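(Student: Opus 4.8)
The plan is to mirror exactly the argument of Proposition~\ref{theor8}, simply replacing rays with straight lines and invoking the already-established openness of $E^{\triangle}$ (Theorem~\ref{theor6_2}) in place of the openness of $E^{\diamondsuit}$ (Theorem~\ref{theor6}). So the proof will be a two-sentence affair whose content is entirely drawn from earlier results; no genuinely new obstacle is expected.

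Concretely, I would first note that, by Theorem~\ref{theor6_2}, the set $E^{\triangle}$ is open, hence every boundary point $x\in\partial E^{\triangle}$ lies on the boundary of the open weakly $1$-convex set $E$ (indeed $\partial E^{\triangle}$ consists of boundary points of $E$, as argued in the proof of Corollary~\ref{corol1_1} via the weak $1$-convexity of $E$). Therefore, for each $x\in\partial E^{\triangle}$ there is a straight line $\gamma(x)$ with $x\in\gamma(x)$ and $\gamma(x)\cap E=\varnothing$; fixing one such line for every $x$ gives the desired collection $\{\gamma(x)\}_{x\in\partial E^{\triangle}}$, and the second bullet is immediate.

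For the first bullet I would argue by contradiction, exactly as in Proposition~\ref{theor8}: suppose some straight line $\gamma$ passing through a point $y\in E^{\triangle}$ were entirely contained in $\bigcup_{x\in\partial E^{\triangle}}\gamma(x)\cup E^{\triangle}$. Since each $\gamma(x)$ misses $E$ and $E^{\triangle}\subset\mathbb{R}^n\setminus E$ by Definition~\ref{def5}, the line $\gamma$ would then satisfy $\gamma\cap E=\varnothing$; but $y\in E^{\triangle}$ means every straight line through $y$ meets $E$, a contradiction. This completes the proof.

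\begin{proof}
By Theorem~\ref{theor6_2}, the set $E^{\triangle}$ is open. Since $E$ is weakly $1$-convex, for any point $x\in\partial E^{\triangle}$ there exists a straight line $\gamma(x)$ such that $\gamma(x)\cap E=\varnothing$. Moreover, $\bigcup\limits_{x\in\partial E^{\triangle}}\gamma(x)\cup E^{\triangle}$ does not contain any straight line passing through $E^{\triangle}$; otherwise a line $\gamma(y)\subset\bigcup\limits_{x\in\partial E^{\triangle}}\gamma(x)\cup E^{\triangle}$, $y\in E^{\triangle}$, would not intersect $E$, which contradicts the definition of $1$-nonconvexity point.
\end{proof}
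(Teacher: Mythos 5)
Your proof is correct and follows essentially the same route as the paper, which simply repeats the argument of Proposition~\ref{theor8} with straight lines in place of rays and uses Theorem~\ref{theor6_2} for the openness of $E^{\triangle}$. One small remark: the existence of $\gamma(x)$ is better justified not by the weak $1$-convexity of $E$ (which would need the unproved inclusion $\partial E^{\triangle}\subset\partial E$), but directly by the openness of $E^{\triangle}$: a point $x\in\partial E^{\triangle}$ lies neither in $E$ nor in $E^{\triangle}$, so by Definition~\ref{def5} it is not a $1$-nonconvexity point and hence some straight line through it misses $E$.
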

  \begin{proof}
  The statements are similar to the proof of Proposition~\ref{theor8}. We only consider straight lines instead of rays.
  \end{proof}

\begin{lemma}\label{lemm2}
There exists an open set $E^3\in\left(\mathbf{WS^3_1}\setminus \mathbf{S^3_1}\right)\cap \left(\mathbf{WC^3_1}\setminus\mathbf{C^3_1}\right)$  such that  the set $(E^3)^{\diamondsuit}=(E^3)^{\triangle}$ is bounded, connected, and non-convex.
\end{lemma}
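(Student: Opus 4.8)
The plan is to construct $E^3$ explicitly as a ``thickened'' analogue in $\mathbb{R}^3$ of the planar examples from Figures~\ref{Fig17}~a) and~\ref{Fig18}~b), namely by removing from a large open convex body a closed non-convex set whose ``shadow data'' (the family of lines and rays through the hole) produces a non-empty $1$-nonconvexity/$1$-nonsemiconvexity point set. Concretely, I would start from an open convex set $C\subset\mathbb{R}^3$ (say an open ball or cylinder of large radius) and remove a suitable compact non-convex obstruction $K$, so that $E^3 = C\setminus K$, chosen so that $E^3$ is disconnected (at least three components, as forced by Lemma~\ref{theor01}) and so that there is a bounded connected non-convex open region $\Omega\subset C\setminus E^3$ with the property that every line through every point of $\Omega$ hits $E^3$, and simultaneously every ray from every point of $\Omega$ hits $E^3$. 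The natural candidate for $K$ is something like a ``bent slab'' or an L-shaped/U-shaped prism: taking a planar non-convex polygon $P$ (e.g.\ an L-shape) with the property that the region ``behind'' it is trapped, and letting $K = P\times[-h,h]$ thickened slightly, the complementary trapped region $\Omega$ inherits non-convexity from $P$.

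The key steps, in order, would be: (1) fix the planar picture — choose a non-convex polygon or generalized polygon $P$ and a large convex $C_0$ in a plane $\mathbb{R}^2$ such that $C_0\setminus P$ is a planar weakly $1$-semiconvex and weakly $1$-convex set with non-empty, bounded, connected, non-convex $1$-nonsemiconvexity point set equal to its $1$-nonconvexity point set (the text already asserts such planar examples exist, e.g.\ via cutting along lines through the sides and accumulation points of vertices); (2) thicken: set $E^3 := (C_0\setminus P)\times(-h,h)$ intersected with a large open ball, or more carefully a slight ``barrel'' so that the boundary in the $z$-direction does not destroy weak $1$-convexity — here one must verify that through each boundary point of $E^3$ there still passes a line (resp.\ ray) missing $E^3$: for boundary points coming from the planar boundary, use the planar line/ray and keep it horizontal; for the new top/bottom boundary points $z=\pm h$, use a vertical line (resp.\ vertical ray) which misses $E^3$ entirely since $E^3$ is confined to $|z|<h$; (3) identify $(E^3)^{\triangle}$ and $(E^3)^{\diamondsuit}$: show they both equal $\Omega\times(-h',h')$ for an appropriate $h'$ — a point $y=(y',z)$ is a $1$-nonconvexity point iff every line through it meets $E^3$; lines that are nearly horizontal are handled by the planar property applied in the horizontal slice through $y$ (valid when $|z|<h$), while lines with a large vertical component are handled because they must exit the slab $|z|<h$ and, having non-negligible horizontal direction, their horizontal projection is a line through $y'$ which the planar picture forces to meet the planar set, hence the spatial line meets $E^3$ provided $y$ is not too close to $z=\pm h$; the genuinely vertical line through $y$ misses $E^3$ only if $y'\notin \overline{C_0\setminus P}$'s relevant column, which we arrange to fail on $\Omega$; (4) conclude that $(E^3)^{\triangle}=(E^3)^{\diamondsuit}$ is bounded (inside $C_0\times(-h,h)$), connected (as $\Omega$ is connected and we take a product with an interval), and non-convex (as $\Omega$ is non-convex).

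I would then need to double-check the class memberships carefully: that $E^3$ is open (clear, it is an intersection/difference of open sets with a closed set — one must make sure $P$ and its thickening are closed), and that $E^3\in\mathbf{WS^3_1}$ and $E^3\in\mathbf{WC^3_1}$ via the line/ray constructions in step (2), and that $(E^3)^{\diamondsuit}\ne\varnothing$, $(E^3)^{\triangle}\ne\varnothing$ via step (3), so that indeed $E^3\in(\mathbf{WS^3_1}\setminus\mathbf{S^3_1})\cap(\mathbf{WC^3_1}\setminus\mathbf{C^3_1})$.

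The main obstacle I anticipate is step (3), specifically controlling the ``oblique'' lines and rays near the top and bottom faces $z=\pm h$: a line through $y$ with a steep but nonzero slope could conceivably escape through $z=\pm h$ without its horizontal projection having traveled far enough to be forced into the planar set, so the set of genuine $1$-nonconvexity points might shrink to a strictly smaller sub-slab $\Omega'\times(-h',h')$ with $\Omega'\subsetneq\Omega$, or even become empty if $h$ is too small relative to the ``depth'' of the trap $\Omega$; the fix is a quantitative choice — make $h$ large compared to the diameter of $C_0$ (or equivalently make the trap $\Omega$ deep and narrow) so that any line exiting through $|z|=h$ from a point well inside the slab has horizontal displacement exceeding $\mathrm{diam}(C_0)$, forcing it to meet $E^3$ in the horizontal direction first; then $(E^3)^{\triangle}$ contains a genuine sub-slab over a non-convex connected base, which suffices. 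A secondary, more cosmetic difficulty is ensuring the top/bottom boundary does not accidentally create boundary points through which no missing line passes — using a strictly convex profile in $z$ (a barrel rather than a flat-topped cylinder) or simply noting that a vertical line always misses the bounded-in-$z$ set $E^3$ resolves this cleanly.
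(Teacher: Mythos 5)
There are two genuine gaps, and they sit at the heart of your construction rather than in the details. First, step (1) is impossible: you ask for a planar set $C_0\setminus P$ in $\left(\mathbf{WS^2_1}\setminus\mathbf{S^2_1}\right)\cap\left(\mathbf{WC^2_1}\setminus\mathbf{C^2_1}\right)$ whose $1$-nonconvexity/$1$-nonsemiconvexity point set is connected and non-convex, but by Lemma~\ref{lemm3}~(b) and Lemma~\ref{lemm4}~(b) every component of $E^{\triangle}$ (resp.\ $E^{\diamondsuit}$) of a planar set in these classes is convex; a connected such set is a single component and hence convex. So non-convexity cannot be imported from the planar base, which is exactly why the paper's example does not use a non-convex hole: it takes a planar example with a \emph{convex} polygon $P$ (cut along the lines through its sides) and produces the non-convexity by bending in the third coordinate --- two oblique cylinders $E^3_-$, $E^3_+$ over $E^2$, tilted in opposite directions and glued along the base plane, with caps $D^3_\pm$ added above and below; the trapped set is then the bent prism $P^3_-\cup P^3_+$, which is connected, bounded and non-convex.

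Second, step (3) fails for the slab/product geometry even with a convex $P$. For $E^3=(C_0\setminus P)\times(-h,h)$ (or any set confined to $|z|<h$ whose horizontal cross-sections never cover the hole), the exactly vertical line, and the two vertical rays, through any point lying over the removed region miss $E^3$ entirely --- this is the very escape you use to certify weak $1$-convexity at the top and bottom boundary, and it simultaneously kills every would-be $1$-nonconvexity point, so $(E^3)^{\triangle}=(E^3)^{\diamondsuit}=\varnothing$ and $E^3\notin\mathbf{WC^3_1}\setminus\mathbf{C^3_1}$. Your quantitative fix (take $h$ large compared to $\mathrm{diam}(C_0)$, or use a barrel profile) does not help: for lines of slope close to vertical the horizontal displacement across the slab tends to $0$, so nearly vertical lines escape no matter how $h$ is chosen. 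To trap vertical and near-vertical directions you must place material above and below the hole (the paper's caps $D^3_\pm$), and then the flat-prism-with-caps variant creates boundary points on the top/bottom faces of the hole through which no line misses the set; the paper's oblique-cylinder arrangement is precisely what reconciles weak $1$-convexity with a non-empty trapped set, and at the same time delivers the required non-convexity of $(E^3)^{\triangle}=(E^3)^{\diamondsuit}$.
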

\begin{proof}
  Let
 $$
  E^2:=\left(D\setminus P\right)\setminus \bigcup\limits_k \gamma^{k}\subset \mathbb{R}^2,
  $$
  where $D\subset \mathbb{R}^2$ is an open bounded convex subset, $P\subset \mathbb{R}^2$ is an open convex polygon such that $\overline{P}\subset D$, $\left\{\gamma^{k}\right\}_{k\in M}$, $M\subseteq\mathbb{N}$, is the finite collection of lines passing through the polygon sides. Then $E^2\in\left(\mathbf{WS^2_1}\setminus \mathbf{S^2_1}\right)\cap \left(\mathbf{WC^2_1}\setminus\mathbf{C^2_1}\right)$ and $(E^2)^{\diamondsuit}=(E^2)^{\triangle}=P$.

Consider the line segment $Oa_+^3$,
where $a_+^3\in \mathbb{R}^{3}$ is such that the angle between the vector $a_+^3$ and the unit vector $u$ of the axis $Ox_3$ belongs to the interval $(0,\pi/2)$.

 Let $E_+^3$ be a bounded oblique cylinder with the set $E^2$ at the base and elements parallel to $Oa_+^3$, i.e.,
 $$
E_+^3:=\{z\in \mathbb{R}^{3}: z=x+h,\,\, x\in E^2,\,\, h\in \{O\}\cup Oa_+^3\}.
$$

Let also  $E_-^3$  be the oblique cylinder symmetric to  $E_+^3$ with respect to the coordinate plane $x_1Ox_2$; see Figure~\ref{Fig16} a).

Let $\rho>0$ be the height of $E_+^3$, and $D_+$ be the orthogonal projection of the set $\{z\in \mathbb{R}^{3}: z=x+a_+^3,\,\, x\in D\}$ onto $x_1Ox_2$. Consider the following cylinders:
$$
D_+^3:=D_+\times \left(\rho,1\frac{1}{2}\rho\right)
$$
and $D_-^3$ that is the cylinder symmetric to  $D_+^3$ with respect to the coordinate plane $x_1Ox_2$.

Let
$$
E^3:=D_-^3\cup E_-^3\cup E_+^3\cup D_+^3,
$$
 see Figure~\ref{Fig16} a).
 \begin{figure}[h]
    \centering
   \includegraphics[width=12.5 cm]{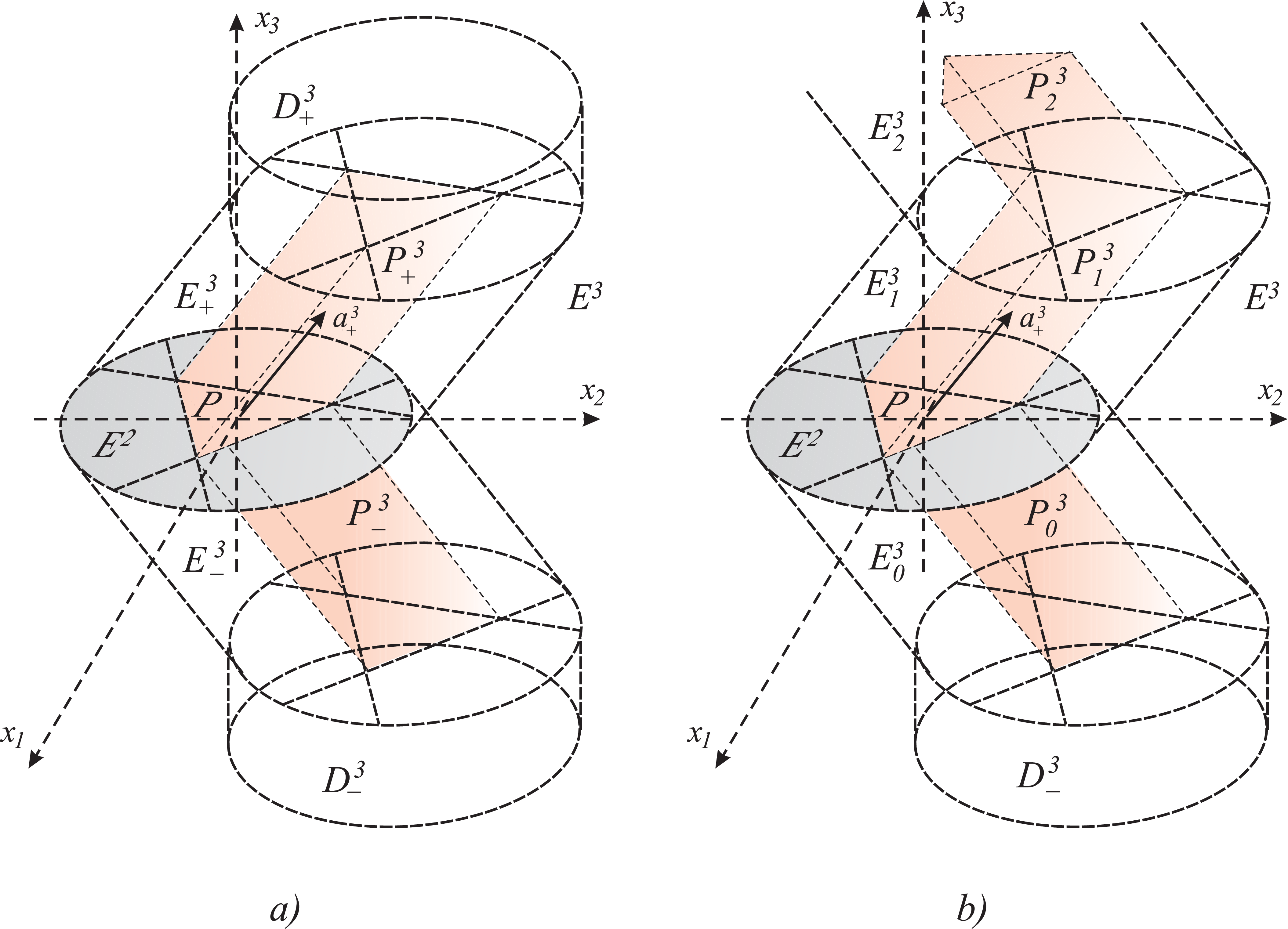}
    \caption{}\label{Fig16}
\end{figure}

Consider also the following polygonal oblique prisms:
$$
P_+^3:=\{z\in \mathbb{R}^{3}: z=x+h,\,\, x\in P,\,\, h\in \{O\}\cup Oa_+^3\},
$$
$$
P_-^3:=\{z\in \mathbb{R}^{3}: z=x+h,\,\, x\in P,\,\, h\in \{O\}\cup Oa_-^3\},
$$
where $a_-^3$ is the vector symmetric to  $a_+^3$ with respect to the coordinate plane $x_1Ox_2$.
Prove that $$(E^3)^{\triangle}=(E^3)^{\diamondsuit}= P_-^3\cup P_+^3.$$

First, show that $$(E^3)^{\triangle}\supset(E^3)^{\diamondsuit}\supset P_-^3\cup P_+^3.$$

Consider an arbitrary point $x\in P_-^3\cup P_+^3$. Then $x\in P_q^3$, $q\in\{-,+\}$. Let  $\eta(x)$ be an arbitrary ray emanating from  $x$. Show that $\eta(x)\cap E^3\ne\varnothing$.
\begin{enumerate}
  \item[1.]
If $\eta(x)$ intersects a lateral face of $P_q^3$, then consider  the projection, parallel to $Oa_q^3$, of $\eta(x)$ onto the coordinate plane $x_1Ox_2$. It  is a ray that we define by $\eta(x_0)$. The ray $\eta(x_0)$ emanates from the point $x_0\in x_1Ox_2$ which is the projection of $x$ onto $P\subset x_1Ox_2$.  Since $E^2$ is a flat weakly $1$-semiconvex set,  it implies that $\eta(x_0)\cap E^2\ne\varnothing$,  which gives that $\eta(x)\cap E^3_q\ne\varnothing$, therefore, $\eta(x)\cap E^3\ne\varnothing$.

\item[2.] If $\eta(x)$ intersects a base of $P_q^3$, then it intersects either $D_-^3\cup D_+^3$, which immediately gives that $\eta(x)\cap E^3\ne\varnothing$, or it intersects a lateral face of the other prism $P_{q'}^3$, ${q'}\in\{-,+\}$, ${q'}\ne q$,  by the construction, and further considerations are the same as in item 1, but for $P_{q'}^3$, a  point $x'\in \eta(x)\cap P_{q'}^3$, and the ray $\eta(x')\subset \eta(x)$. Then $\eta(x')\cap E^3\ne\varnothing$, therefor, $\eta(x)\cap E^3\ne\varnothing$.

\end{enumerate}
Moreover, if $x\in (E^3)^{\diamondsuit}$, then $x\in(E^3)^{\triangle}$.

Now, prove that $E^3\in\mathbf{WC^3_1}$ and, thus, $E^3\in\mathbf{WS^3_1}$, and
$$(E^3)^{\triangle}\subset(E^3)^{\diamondsuit}\subset P_-^3\cup P_+^3.$$

It is enough to show that if $z\not\in E^3\cup P_-^3\cup P_+^3,$ then $z\not\in (E^3)^{\triangle}$.  Let $L$ be the plane passing through $z$ parallel to the coordinate plane $x_1Ox_2$. Then the intersection $L\cap E^3$  is either 1) empty or 2) congruent to $D$, or 3) congruent to $E^2$.

1) Any straight line passing through $z$ in $L$ does not intersect $E^3$.

2) Since $L\cap E^3$ is convex in $L$, there exists a straight line passing through $z$ in $L$ and not intersecting $L\cap E^3$, therefore, not intersecting $E^3$.

3) $L\cap E^3\in \mathbf{WC^{2}_1}\setminus\mathbf{C^{2}_1}$ and $L\cap\left(P_-^3\cup P_+^3\right)=(L\cap E^3)^\triangle$ with respect to $L$. Since $z\not\in (L\cap E^3)^\triangle$, there exists a straight line passing through $z$  in $L$ and not intersecting $L\cap E^3$, therefore, not intersecting $E^3$.

The set $(E^3)^{\triangle}$ is bounded, connected, and non-convex, obviously.

\end{proof}

\begin{lemma}\label{lemm7}
There exists an open set $E^3\in\left(\mathbf{WS^3_1}\setminus \mathbf{S^3_1}\right)\cap \left(\mathbf{WC^3_1}\setminus\mathbf{C^3_1}\right)$  such that  the set $(E^3)^{\diamondsuit}=(E^3)^{\triangle}$ is unbounded, connected, and non-convex.
\end{lemma}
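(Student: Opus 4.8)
The plan is to imitate the construction in Lemma~\ref{lemm2}, modifying only the planar building block so that the resulting $1$-nonconvexity set becomes unbounded while keeping everything else intact. Recall that in Lemma~\ref{lemm2} the base set was $E^2=(D\setminus P)\setminus\bigcup_k\gamma^k$ with $D$ a \emph{bounded} convex open set and $P$ an open convex polygon; this forced $(E^2)^\triangle=(E^2)^\diamondsuit=P$, which is bounded. To get an unbounded $1$-nonconvexity set I would instead take $D$ to be an \emph{unbounded} convex open set (for instance an open half-plane, or an open infinite strip, or the open epigraph of a strictly convex function) and $P\subset D$ to be an unbounded closed convex set whose boundary consists of countably many line segments and at most two rays — i.e.\ an unbounded convex generalized polygon — with $\overline P\subset D$. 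Then, exactly as in the flat examples discussed after Lemma~\ref{lemm4}, one cuts $D\setminus P$ along the (finitely or countably many) full lines $\gamma^k$ carrying the edges and rays of $\partial P$ and along the lines through the accumulation points of the vertices, obtaining an open set $E^2\in(\mathbf{WS^2_1}\setminus\mathbf{S^2_1})\cap(\mathbf{WC^2_1}\setminus\mathbf{C^2_1})$ with $(E^2)^\triangle=(E^2)^\diamondsuit=P$, now unbounded, connected and non-convex.

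Next I would run the three-dimensional thickening verbatim: pick a vector $a_+^3\in\mathbb{R}^3$ making an angle in $(0,\pi/2)$ with the $Ox_3$-axis, form the oblique cylinders $E_+^3$ and $E_-^3$ over $E^2$ with generators along $\pm Oa_+^3$, cap them with the cylinders $D_+^3$, $D_-^3$ over (the appropriate translates of) $D$, and set $E^3:=D_-^3\cup E_-^3\cup E_+^3\cup D_+^3$; likewise define the oblique prisms $P_+^3$, $P_-^3$ over $P$. The two inclusions are then argued as in Lemma~\ref{lemm2}. For $(E^3)^\diamondsuit\supset P_-^3\cup P_+^3$: a ray from $x\in P_q^3$ either meets a lateral face of $P_q^3$, in which case its oblique projection onto $x_1Ox_2$ is a ray from the projected point in $P$, which hits $E^2$ by weak $1$-semiconvexity of $E^2$; or it meets a base, in which case it either enters $D_\pm^3$ directly or crosses a lateral face of the opposite prism and one repeats the argument there. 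For the reverse inclusion $(E^3)^\triangle\subset P_-^3\cup P_+^3$, one slices by planes $L$ parallel to $x_1Ox_2$: the section $L\cap E^3$ is empty, or congruent to $D$, or congruent to $E^2$, and in each case a point $z\notin E^3\cup P_-^3\cup P_+^3$ admits a line in $L$ missing the section, using convexity of $D$ in the first two cases and $L\cap(P_-^3\cup P_+^3)=(L\cap E^3)^\triangle$ (with respect to $L$) together with $z\notin(L\cap E^3)^\triangle$ in the third.

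The one genuinely new point — and the step I expect to require the most care — is verifying that the \emph{unbounded} planar block still satisfies $(E^2)^\triangle=(E^2)^\diamondsuit=P$ and in particular still lies in $\mathbf{WC^2_1}\setminus\mathbf{C^2_1}$. When $D$ is unbounded, a boundary point of $E^2$ lying on $\partial D$ or ``at infinity along a cut'' must still have a full line through it missing $E^2$; the choice of cutting lines must be arranged so that along every edge-ray of $\partial P$ and along $\partial D$ there is a genuine straight line (not merely a ray) disjoint from $D\setminus P$ — this is why $D$ must be taken with ``enough room'', e.g.\ a half-plane or strip, rather than an arbitrary unbounded convex region, and why $P$'s unbounded directions must be compatible with $D$'s. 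One must also check that no line (resp.\ ray) through a point of $P$ can escape to infinity inside $(D\setminus P)\cup P$ without meeting $E^2$, i.e.\ that the cuts genuinely block all lines through $P$; this is the direct analogue of the ``generalized polygon'' discussion in the introduction and can be handled by choosing the $\gamma^k$ to include, besides the edge-lines, the lines through the finitely many accumulation points of the vertices. Once this planar verification is in place, the rest is identical to Lemma~\ref{lemm2}, and the asserted properties of $(E^3)^\triangle=(E^3)^\diamondsuit$ — unbounded, connected, non-convex — are immediate from the corresponding properties of $P$.
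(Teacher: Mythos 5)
Your construction cannot satisfy the semiconvexity half of the statement, and the failure is already visible in your planar block. If $P$ is an unbounded convex set, it has a nontrivial recession direction $v$; for any $x\in P$ the ray $\{x+tv:\ t\geqslant 0\}$ stays inside $P$, hence misses $E^2=(D\setminus P)\setminus\bigcup_k\gamma^k$ no matter how $D$ and the cutting lines are chosen. So no point of $P$ is a $1$-nonsemiconvexity point of $E^2$, and your claim $(E^2)^{\diamondsuit}=P$ (unbounded) is false — indeed it contradicts Lemma~\ref{lemm4}~(b), which says that in the plane the components of $E^{\diamondsuit}$ are convex \emph{and bounded}. The same recession ray kills the three-dimensional step: for $x=p+h\in P_+^3$ with $p\in P$, the horizontal ray $x+tv$ remains in $P_+^3$ for all $t\geqslant0$ (it stays at height at most $\rho$, so it meets neither $D_\pm^3$ nor $E_\pm^3$), hence $x\notin (E^3)^{\diamondsuit}$. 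Consequently $(E^3)^{\diamondsuit}\ne(E^3)^{\triangle}$ — in fact $(E^3)^{\diamondsuit}=\varnothing$, so $E^3\notin\mathbf{WS^3_1}\setminus\mathbf{S^3_1}$. At best your construction could give an unbounded non-convex $1$-nonconvexity-point set (the weakly $1$-convex half), but not the simultaneous statement of the lemma.

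The paper avoids exactly this obstruction by keeping the \emph{bounded} planar block $E^2$ of Lemma~\ref{lemm2} and making the three-dimensional set unbounded instead: it stacks infinitely many translated copies of the two oblique cylinders $E_\pm^3$ along the $x_3$-axis (with a single cap $D_-^3$ at the bottom), producing an unbounded union of prisms $\bigcup_{k\geqslant0}P_k^3$ as the candidate for $(E^3)^{\diamondsuit}=(E^3)^{\triangle}$, and — this is the key point your proposal is missing — chooses the slant vector $a_+^3$ so that $\bigcup_k P_k^3$ \emph{contains no ray}. That zigzag condition is precisely what guarantees that every ray emanating from a prism point eventually leaves the prisms through a lateral face or a base and therefore meets $E^3$, which is what makes the prism points genuine $1$-nonsemiconvexity points. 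If you want to salvage your idea, you must build the unboundedness into the third coordinate in such a way that the resulting nonconvexity/nonsemiconvexity candidate set is ray-free; making $P$ itself unbounded can never achieve this.
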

\begin{proof}
Consider the oblique cylinders

 \begin{align}\label{equ3}
E_{2k}^3&:=\{ z\in \mathbb{R}^{3}: z=x+2k\rho u,\,\, x\in E_-^3\},&\\
E_{2k+1}^3&:=\{z\in \mathbb{R}^{3}: z=x+2k\rho u,\,\, x\in E_+^3\}, & k=0,1,2,\ldots,
\end{align}
where
$$
E_-^3:=\{z\in \mathbb{R}^{3}: z=x+h,\,\, x\in E^2,\,\, h\in  Oa_-^3\},
$$
$$
E_+^3:=\{z\in \mathbb{R}^{3}: z=x+h,\,\, x\in E^2,\,\, h\in  \overline{Oa_+^3}\}.
$$

Now make sure that the unbounded open set
$$
E^3:=D_-^3\cup \bigcup\limits_{k=0}^\infty E^3_{k}
$$
belongs to the class $\left(\mathbf{WS^3_1}\setminus \mathbf{S^3_1}\right)\cap \left(\mathbf{WC^3_1}\setminus\mathbf{C^3_1}\right)$; see Figure~\ref{Fig16} b).

Consider the following polygonal oblique prisms.
 \begin{align*}
P_{2k}^3&:=\{z\in \mathbb{R}^{3}: z=x+2k\rho u,\,\,  x\in P_-^3\},&\\
P_{2k+1}^3&:=\{z\in \mathbb{R}^{3}: z=x+2k\rho u,\,\,  x\in P_+^3\}, & k=0,1,2,\ldots,
\end{align*}
where
$$
P_-^3:=\{z\in \mathbb{R}^{3}: z=x+h,\,\, x\in P,\,\, h\in Oa_-^3\},
$$
$$
P_+^3:=\{z\in \mathbb{R}^{3}: z=x+h,\,\, x\in P,\,\, h\in \overline{Oa_+^3}\}.
$$
At this point, we choose $a_+^3$ such that the set $\bigcup\limits_{k=0}^\infty P_{k}^3$ does not contain any ray.

Prove that
$$(E^3)^{\triangle}=(E^3)^{\diamondsuit}= \bigcup\limits_{k=0}^\infty P_{k}^3.$$

First, show that $$(E^3)^{\triangle}\supset(E^3)^{\diamondsuit}\supset \bigcup\limits_{k=0}^\infty P_{k}^3.$$

Define the bottom base of each prism $P^3_{k}$,  $ k=0,1,2,\ldots$, by $P^-_k$. Consider an arbitrary point $x\in \bigcup\limits_{k=0}^\infty P_{k}^3$. Then $x\in P^3_q$, $q\in \{0,1,2,\ldots\}$. Let  $\eta(x)$ be an arbitrary ray emanating from  $x$.
\begin{enumerate}
  \item[1.]
If $\eta(x)$ intersects a lateral face of the prism $P^3_q$, then consider  the projection, parallel to the lateral edges of $P^3_q$, of $\eta(x)$ onto the plane $L\supset P^-_q$. It  is a ray that we define by $\eta(x_0)$. The ray $\eta(x_0)$ emanates from the point $x_0\in P^-_q$ which is the projection of $x$ onto $L$.  Since $L\cap E^3$ is a flat weakly $1$-semiconvex set as a set congruent to $E^2$,  it implies that $\eta(x_0)\cap (L\cap E^3)\ne\varnothing$,  which gives that $\eta(x)\cap E^3_q\ne\varnothing$, therefore, $\eta(x)\cap E^3\ne\varnothing$.

\item[2.] If $\eta(x)$ intersects a base of $P^3_q$, then it intersects either $D_-^3$, which immediately gives that $\eta(x)\cap E^3\ne\varnothing$, or it intersects a lateral face of another prism $P^3_{q'}$, $q'\in \{0,1,2,\ldots\}$, by the construction, and further considerations are the same as in item 1, but for a  point $x'\in \eta(x)\cap P^3_{q'}$ and the ray $\eta(x')\subset \eta(x)$. Then $\eta(x')\cap E^3\ne\varnothing$, therefor, $\eta(x)\cap E^3\ne\varnothing$.

\end{enumerate}
Moreover, if $x\in (E^3)^{\diamondsuit}$, then $x\in(E^3)^{\triangle}$.

The proof of the fact that $E^3\in\mathbf{WC^3_1}$ and, therefore, $E^3\in\mathbf{WS^3_1}$, and
$$(E^3)^{\triangle}\subset(E^3)^{\diamondsuit}\subset \bigcup\limits_{k=0}^\infty P^3_k$$
is the same as in the proof of Lemma~\ref{lemm2}.
\end{proof}

\begin{theorem}
There exists an open set $E^n\in\left(\mathbf{WS^n_1}\setminus \mathbf{S^n_1}\right)\cap \left(\mathbf{WC^n_1}\setminus\mathbf{C^n_1}\right)$, $n\geqslant 3$,  such that  the set $(E^n)^{\diamondsuit}=(E^n)^{\triangle}$ is bounded (or unbounded), connected, and non-convex.
\end{theorem}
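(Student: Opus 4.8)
The plan is to argue by induction on $n$, taking Lemmas~\ref{lemm2} and \ref{lemm7} as the base case $n=3$ and passing from dimension $n-1$ to dimension $n$ by exactly the oblique-cylinder-with-caps construction used there to pass from $E^2$ to $E^3$ (equivalently, one iterates that construction $n-2$ times starting from $E^2$). For the induction to close I would strengthen the statement, carrying along the following structure: $E^{n-1}\subset\mathbb{R}^{n-1}$ is obtained from an open bounded convex set $D^{n-1}$ by removing the closure of an open ``generalized polytope'' $P^{n-1}$ with $\overline{P^{n-1}}\subset D^{n-1}$ together with the finite (or countable) family of hyperplanes spanned by the facets of $P^{n-1}$, and $(E^{n-1})^{\diamondsuit}=(E^{n-1})^{\triangle}=P^{n-1}$ is connected and non-convex; Lemma~\ref{lemm2} (respectively Lemma~\ref{lemm7}) verifies this for $n-1=2$.

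For the inductive step I would fix a vector $a_+^n\in\mathbb{R}^n$ whose angle with the unit vector $u$ of the $x_n$-axis lies in $(0,\pi/2)$, let $a_-^n$ be its mirror image in the hyperplane $\{x_n=0\}$, and form the oblique cylinders $E_\pm^n$ over $E^{n-1}$ with generators parallel to $Oa_\pm^n$, together with the straight ``caps'' $D_\pm^n$, which are right cylinders over a translate of $D^{n-1}$ placed at suitable heights above and below, exactly as $D_\pm^3$ were placed in Lemma~\ref{lemm2}. In the bounded case set $E^n:=D_-^n\cup E_-^n\cup E_+^n\cup D_+^n$; in the unbounded case one may instead take $E^{n-1}$ unbounded and keep this finite construction, or take $E^{n-1}$ bounded and replace the four pieces by an infinite stack $D_-^n\cup\bigcup_{k\geqslant 0}E_k^n$ of alternately up- and down-tilted oblique cylinders shifted by multiples of $2\rho u$, with $a_+^n$ chosen so small that the resulting stack of oblique prisms $\bigcup_k P_k^n$ contains no ray, precisely as in Lemma~\ref{lemm7}. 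The candidate for the nonconvexity/nonsemiconvexity set is the union $P_-^n\cup P_+^n$ (resp.\ $\bigcup_k P_k^n$) of the oblique prisms over $P^{n-1}$.

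The verification then splits, as in Lemma~\ref{lemm2}, into two inclusions; since $(E^n)^{\diamondsuit}\subseteq(E^n)^{\triangle}$ always, it suffices to show $P_-^n\cup P_+^n\subset(E^n)^{\diamondsuit}$ and $(E^n)^{\triangle}\subset P_-^n\cup P_+^n$. For the first, take $x$ in an oblique prism $P_q^n$ and an arbitrary ray $\eta(x)$; projecting $\eta(x)$ parallel to the prism generators onto the hyperplane carrying the base gives a ray from a point $x_0\in P^{n-1}=(E^{n-1})^{\diamondsuit}$, which meets $E^{n-1}$ by the inductive hypothesis, so $\eta(x)$ meets the oblique cylinder $E_q^n\subset E^n$; the case in which $\eta(x)$ instead leaves through a base is handled exactly as item 2 of the proof of Lemma~\ref{lemm2}, namely $\eta(x)$ then meets a cap, hence $E^n$, or re-enters the other oblique prism and the argument repeats. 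For the reverse inclusion, which simultaneously yields $E^n\in\mathbf{WC^n_1}$ (and hence $E^n\in\mathbf{WS^n_1}$), I would, given $z\notin E^n\cup P_-^n\cup P_+^n$, slice by the hyperplane $L$ through $z$ parallel to $\{x_n=0\}$: by construction $L\cap E^n$ is empty, or congruent to the convex body $D^{n-1}$, or congruent to $E^{n-1}$, and in each case — trivially in the first two, and via $E^{n-1}\in\mathbf{WC^{n-1}_1}\setminus\mathbf{C^{n-1}_1}$ together with $(L\cap E^n)^{\triangle}=L\cap(P_-^n\cup P_+^n)$ inside $L$ in the third — there is a line through $z$ in $L$ missing $L\cap E^n$, hence missing $E^n$, so $z\notin(E^n)^{\triangle}$. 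Since $\partial E^n$ is disjoint from the open prisms, this also gives weak $1$-convexity of $E^n$, and therefore $(E^n)^{\triangle}=(E^n)^{\diamondsuit}=P_-^n\cup P_+^n$; boundedness (or unboundedness), connectedness, and non-convexity of this set are immediate from those of $P^{n-1}$.

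I expect the work here to be bookkeeping rather than a genuine obstacle: the one delicate point is phrasing the inductive hypothesis so that $E^{n-1}$ really carries the ``convex body minus generalized polytope, cut along the facet-hyperplanes'' form that makes the slices $L\cap E^n$ fall into the three listed types and makes the caps $D_\pm^n$ and prisms $P_\pm^n$ meaningful, and, in the unbounded case, reprising the quantitative choice of $a_+^n$ from Lemma~\ref{lemm7} that keeps the infinite stack of oblique prisms free of entire rays. Both are direct transcriptions of the $n=3$ arguments, so no new idea beyond the induction itself is required.
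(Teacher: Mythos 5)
Your inductive step is genuinely different from the paper's, and it has a gap exactly at the point you yourself flag as ``delicate''. The paper does not iterate the oblique two-prism construction: its inductive step takes the \emph{right} cylinder $\widetilde{E}^n=E^{n-1}\times(-1,1)$ capped by $D^{n-1}\times\left(1,1\tfrac12\right)$ and $D^{n-1}\times\left(-1\tfrac12,-1\right)$, where $D^{n-1}$ is the \emph{convex hull} of $E^{n-1}$, and the candidate set is $P^{n-1}\times(-1,1)$; the oblique trick is used only once, at $n=3$, where it is needed to create non-convexity. The choice matters because of the timing step in the forward inclusion: one must know that the ray's projection meets $E^{n-1}$ \emph{while the ray is still inside the slab of heights covered by the cylinder}, or else that the ray is caught by a cap. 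In the paper's step this follows from convexity: if the ray misses both caps, then at the moment it leaves the slab its projection lies outside $D^{n-1}$, so the projected ray (which starts in $P^{n-1}\subset D^{n-1}$ and, being a ray, meets the convex set $D^{n-1}$ in an initial segment) has already met $E^{n-1}\subset D^{n-1}$ before the slab is left. Your construction has no such mechanism.

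Concretely, two things break in your version. First, the strengthened hypothesis you propose to carry (``$E^{n-1}$ is an open convex body minus the closure of a generalized polytope minus the hyperplanes through its facets'') is destroyed after a single application of your own construction: $E^3=D_-^3\cup E_-^3\cup E_+^3\cup D_+^3$ is a union of two oppositely tilted oblique cylinders with caps; its outer shell is not convex, and the complement of $E^3$ near $\partial(P_-^3\cup P_+^3)$ is not a union of facet hyperplanes --- it contains, for instance, the whole levels $\{x_3=\pm\rho\}$ (where the slice of $E^3$ is empty) and walls that are \emph{bent} at height $0$ (the cut lines extruded along $a_+^3$ above and along $a_-^3$ below). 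So the induction, as you set it up, does not close. Second, and because of this, the step ``the ray exits through a lateral face of $P_q^n$, its projection meets $E^{n-1}$, hence the ray meets $E_q^n$'' is unsupported at the next level: the projected ray, after leaving $\overline{P^{n-1}}$, can stay in the complement of $E^{n-1}$ for an arbitrarily long stretch (for example travelling inside one of those bent walls or just above an empty level), so the lifted ray may leave the height slab before its projection ever reaches $E^{n-1}$, and caps over a mere translate of your ``$D^{n-1}$'' need not catch it. To repair your route you would have to prove a quantitative statement that a ray leaving $\overline{P^{n-1}}$ enters $E^{n-1}$ essentially immediately, or enlarge the caps to a convex set containing $\mathrm{conv}(E^{n-1})$ together with its shift by the horizontal part of $a_+^n$ and rerun the convexity timing argument --- which is what the paper's product-with-convex-hull-caps construction achieves for free. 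As written, the proposal does not establish the inclusion $P_-^n\cup P_+^n\subset (E^n)^{\diamondsuit}$ for $n\geqslant 4$, and hence does not prove the theorem.
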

\begin{proof} Prove theorem by the induction. For $n=3$, the theorem holds by Lemmas~\ref{lemm2} and~\ref{lemm7}. Suppose that, for $n>3$, an open set
   $E^{n-1}\in\left(\mathbf{WS^{n-1}_1}\setminus \mathbf{S^{n-1}_1}\right)\cap \left(\mathbf{WC^{n-1}_1}\setminus\mathbf{C^{n-1}_1}\right)$, and $P^{n-1}:=(E^{n-1})^{\diamondsuit}=(E^{n-1})^{\triangle}$ is bounded (or unbounded), connected, and non-convex.

Consider the following sets:
$$
\widetilde{E}^n:=E^{n-1}\times (-1,1),
$$
$$
D_-^n:=D^{n-1}\times \left(-1\frac{1}{2},-1\right),\quad D_+^n:=D^{n-1}\times \left(1,1\frac{1}{2}\right),
$$
where $D^{n-1}\subset \mathbb{R}^{n-1}$ is the convex hull of $E^{n-1}$,
$$
E^n:=D_-^n\cup \widetilde{E}^n\cup D_+^n.
$$

First, show that $$(E^n)^{\triangle}\supset(E^n)^{\diamondsuit}\supset P^{n-1}\times (-1,1).$$

Consider an arbitrary point $x\in P^{n-1}\times (-1,1)$, and an arbitrary ray $\eta(x)$ emanating from  $x$. If $\eta(x)\cap (D_-^n\cup D_+^n)\ne\varnothing$, then $\eta(x)\cap E^n\ne\varnothing$.  If $\eta(x)\cap (D_-^n\cup D_+^n)=\varnothing$, then consider  the orthogonal projection of $\eta(x)$ onto the coordinate subspace $\mathbb{R}^{n-1}$. It  is a ray $\eta(x_0)$ emanating from the point $x_0\in P^{n-1}$ which is the orthogonal projection of $x$ onto $\mathbb{R}^{n-1}$.  Therefore, $R:=\eta(x_0)\cap E^{n-1}\ne\varnothing$,  which gives that $\eta(x)\cap(R\times (-1,1))\ne\varnothing$. Then $\eta(x)\cap E^n\ne\varnothing$.

Moreover, if $x\in (E^n)^{\diamondsuit}$, then $x\in(E^n)^{\triangle}$.

Now, prove that $E^n\in\mathbf{WC^n_1}$, therefore, $E^n\in\mathbf{WS^n_1}$, and
$$(E^n)^{\triangle}\subset(E^n)^{\diamondsuit}\subset P^{n-1}\times (-1,1).$$

It is enough to show that if $$z\not\in E^n\cup \left(P^{n-1}\times (-1,1)\right),$$ then $z\not\in (E^n)^{\triangle}$.  Let $L$ be the $(n-1)$-dimensional plane passing through $z$ parallel to the coordinate subspace $\mathbb{R}^{n-1}$. Then the intersection $L\cap E^n$  is either 1) empty or 2) congruent to $D^{n-1}$, or 3) congruent to $E^{n-1}$.

1) Any straight line passing through $z$ in $L$ does not intersect $E^n$.

2) Since $L\cap E^n$ is convex in $L$, there exists a straight line passing through $z$ in $L$ and not intersecting $L\cap E^n$, therefore, not intersecting $E^n$.

3) $L\cap E^n\in \mathbf{WC^{n-1}_1}\setminus\mathbf{C^{n-1}_1}$ and $L\cap\left(P^{n-1}\times (-1,1)\right)=(L\cap E^n)^\triangle$ with respect to $L$. Since $z\not\in L\cap\left(P^{n-1}\times (-1,1)\right)$, there exists a straight line passing through $z$ in $L$ and not intersecting $L\cap E^n$, therefore, not intersecting $E^n$.

The set $P^{n-1}\times (-1,1)$ is bounded (or unbounded), connected, and non-convex, obviously.
\end{proof}
  \section*{Declarations}
 This work was supported by a grant from the Simons Foundation (1290607,T.M.O.). The author declare no potential conflict of interest with respect to the research, authorship and publication of this article. All necessary data are included into the paper.
\bigskip

\end{document}